\tikzset{
  commutative diagrams/.cd, 
  arrow style=tikz, 
  diagrams={>=stealth}
}
\def\@tocline#1#2#3#4#5#6#7{\relax
  \ifnum #1>\c@tocdepth 
  \else
    \par \addpenalty\@secpenalty\addvspace{#2}%
    \begingroup \hyphenpenalty\@M
    \@ifempty{#4}{%
      \@tempdima\csname r@tocindent\number#1\endcsname\relax
    }{%
      \@tempdima#4\relax
    }%
    \parindent\z@ \leftskip#3\relax \advance\leftskip\@tempdima\relax
    \rightskip\@pnumwidth plus4em \parfillskip-\@pnumwidth
    #5\leavevmode\hskip-\@tempdima
      \ifcase #1
       \or\or \hskip 1em \or \hskip 2em \else \hskip 3em \fi%
      #6\nobreak\relax
    \dotfill\hbox to\@pnumwidth{\@tocpagenum{#7}}\par
    \nobreak
    \endgroup
  \fi}
\newcounter{marginnote}
\DeclareMathAlphabet{\mathpzc}{OT1}{pzc}{m}{it}
\newcommand{\TT}{\operatorname{T}}
\newcommand{\M}[4]{\overline{\mathcal{M}}_{#1,#2}(#3,#4)}
\newcommand{\Q}[4]{\mathcal{Q}_{#1,#2}(#3,#4)}
\newcommand{\Qe}[4]{\mathcal{Q}^{\epsilon}_{#1,#2}(#3,#4)}
\newcommand{\QG}[4]{\mathcal{Q}G_{#1,#2}(#3,#4)}
\newcommand{\QGe}[4]{\mathcal{Q}G^{\epsilon}_{#1,#2}(#3,#4)}
\newcommand{\PP}{\mathbb P}
\newcommand{\Z}{\mathbb{Z}}
\newcommand{\N}{\mathbb{N}}
\newcommand{\OO}{\mathcal{O}}
\renewcommand{\to}{\rightarrow}
\newcommand{\EE}{\mathbf{E}}
\newcommand{\LL}{\mathbf{L}}
\newcommand{\MM}{\mathfrak M}
\newcommand{\Aaff}{\mathbb{A}}
\newcommand{\comp}{\chi}
\newcommand{\Pic}{\operatorname{Pic}}
\newcommand{\Hom}{\operatorname{Hom}}
\newcommand{\Gm}{\mathbb{G}_{\text{m}}}
\newcommand{\virt}[1]{[#1]^{\operatorname{virt}}}
\newcommand{\Id}{\operatorname{Id}}
\newcommand{\CC}{\mathbb{C}}
\newcommand{\QQ}{\mathbb{Q}}
\newcommand{\HH}{\mathrm{H}}
\newcommand{\Achow}{\operatorname{A}}
\newcommand{\pt}{\operatorname{pt}}
\newcommand{\bq}{\begin{equation}}
\newcommand{\eq}{\end{equation}}
\newcommand{\ba}{\begin{aligned}}
\newcommand{\ea}{\end{aligned}}
\newcommand{\be}{\begin{enumerate}}
\newcommand{\ee}{\end{enumerate}}
\newcommand{\bsm}{\left(\begin{smallmatrix}}
\newcommand{\esm}{\end{smallmatrix}\right)}                   
\newcommand{\bpm}{\begin{pmatrix}}
\newcommand{\epm}{\end{pmatrix}}
\newcommand{\barr}{\begin{displaymath}\begin{array}{cccc}}
\newcommand{\earr}{\end{array}\end{displaymath}}
\newcommand{\barrl}{\begin{displaymath}\begin{array}{lcl}}
\newcommand{\earrl}{\end{array}\end{displaymath}}
\newcommand{\barl}{\begin{displaymath}\begin{array}{l}}
\newcommand{\earl}{\end{array}\end{displaymath}}
\newcommand{\bxym}{ \begin{displaymath}\xymatrix }
\newcommand{\exym}{\end{displaymath}}
\newcommand{\bcd}{\begin{center}\begin{tikzcd}}
\newcommand{\ecd}{\end{tikzcd}\end{center}}
\newcommand{\R}{\operatorname{R}^{\bullet}}
\newcommand{\pr}{\operatorname{pr}}
\newcommand{\ev}{\operatorname{ev}}
\newcommand{\codim}{\operatorname{codim}}
\newcommand{\vdim}{\operatorname{vdim}}
\newcommand{\om}[1]{\mathcal{#1}}
\newcommand{\NN}{\operatorname{N}}
\theoremstyle{definition}
\newtheorem{thm}{Theorem}[subsection]
\newtheorem{lem}[thm]{Lemma}
\newtheorem{prop}[thm]{Proposition}
\newtheorem{cor}[thm]{Corollary}
\newtheorem*{teo*}{Theorem}
\newenvironment{customthm}[1]
  {\innercustomthm}
  {\endinnercustomthm}
\theoremstyle{definition}
\newtheorem{dfn}[thm]{Definition}
\newtheorem{aside}[thm]{Aside}
\newtheorem{remark}[thm]{Remark}
\newtheorem*{aside*}{Aside}
\newtheorem*{acknowledgements}{Acknowledgements}
\newcommand{\ilemph}[1]{\emph{#1}}
\begin{document}

\title{Relative quasimaps and mirror formulae}
\author{Luca Battistella and Navid Nabijou}

\begin{abstract}We construct and study the theory of relative quasimaps in genus zero, in the spirit of Gathmann. When $X$ is a smooth toric variety and $Y$ is a smooth very ample hypersurface in $X$, we produce a virtual class on the moduli space of relative quasimaps to $(X,Y)$, which we use to define relative quasimap invariants. We obtain a recursion formula which expresses each relative invariant in terms of invariants of lower tangency, and apply this formula to derive a quantum Lefschetz theorem for quasimaps, expressing the restricted quasimap invariants of $Y$ in terms of those of $X$. Finally, we show that the relative $I$-function of Fan--Tseng--You coincides with a natural generating function for  relative quasimap invariants, providing mirror-symmetric motivation for the theory.\end{abstract}

\maketitle

\section{Introduction}
\subsection{The aim of relative quasimap theory} Relative Gromov--Witten theory for smooth pairs $(X,Y)$ occupies a central place in modern enumerative geometry, owing both to its intrinsic interest and to the role it plays in the degeneration formula \cite{Ga,Vre,Li1,Li2,MaulikPandharipande}. However, while the structures underlying the absolute Gromov--Witten invariants --- generating functions, Frobenius manifolds, Lagrangian cones, etc. --- are well-understood, the nature of the corresponding structures in the relative setting remains mysterious.

A promising avenue for addressing these questions is to adapt ideas coming from mirror symmetry to the relative context. There are several possible approaches here, depending on one's viewpoint on mirror symmetry, but the end goal of each should be to obtain pleasant closed formulae for generating functions of relative Gromov--Witten invariants.

Recent work of Fan--Tseng--You \cite{FanTsengYou} uses the correspondence between relative invariants and the Gromov--Witten invariants of orbifolds \cite{AbramovichCadmanWise} in order to derive a mirror theorem for certain restricted generating functions of relative invariants.

We propose a more direct approach to the problem. Our motivation comes from the theory of stable quasimaps: this theory dates back to Givental's earliest work on mirror symmetry \cite{Givental-mirror}, and since then has been systematised and extended in order to prove a large class of mirror theorems in the absolute setting \cite{CF-K,CFKM, CF-K-wallcrossing,CF-K-MirrorSymmetry}. The belief is that a fully-fledged theory of \emph{relative} quasimaps should lead to an equally powerful collection of mirror theorems in the relative setting.

\subsection{Results}
In this paper, we realise this proposal in the context of genus-zero quasimaps relative to a hyperplane section.

\subsubsection{Construction and recursion} We begin by constructing moduli spaces of relative stable quasimaps in the spirit of Gathmann, that is, as substacks of moduli spaces of (absolute) quasimaps:
\begin{equation*} \mathcal{Q}_{0,\alpha}(X|Y,\beta) \hookrightarrow \mathcal{Q}_{0,n}(X,\beta).\end{equation*}
These spaces are equipped with natural virtual fundamental classes, and hence can be used to define \emph{relative quasimap invariants}. We prove a recursion formula for such invariants, expressing each relative invariant in terms of invariants with smaller numerical data (see Theorem \ref{thm recursion intro} below).

\subsubsection{Application 1: wall-crossing} We then apply the recursion formula to relate our invariants to the relative $I$-function considered in \cite{FanTsengYou}. This demonstrates that the theory of relative quasimaps provides a natural framework for studying relative mirror symmetry.
\begin{customthm}{A}[\textbf{Theorem \ref{wallcrossing thm}}]\label{thmA} The relative $I$-function coincides with the following natural generating function for relative quasimap invariants
\begin{equation*}I^{X|Y}(q,z,0) = \sum_\beta q^\beta (\ev_1)_*\left( \frac{1}{z-\psi_1}\virt{\Q{0}{(Y\cdot\beta,0)}{X|Y}{\beta}}\right).\end{equation*}
\end{customthm}
\noindent This result provides a geometric interpretation of the combinatorial factors appearing in \cite[Theorem 4.3]{FanTsengYou}: they are the first Chern classes of the tangency line bundles appearing in the recursion formula for relative quasimaps. The fact that no other terms enter into the recursion is due to the stronger stability condition enjoyed by the quasimap spaces.

\subsubsection{Application 2: quasimap quantum Lefschetz} Besides the connections to relative Gromov--Witten theory, we may also apply the recursion formula to study absolute quasimap theory. By repeatedly decreasing the tangency orders, we obtain a quantum Lefschetz theorem for quasimap invariants, expressing the invariants of a hyperplane section $Y$ in terms of those of $X$. This takes two forms; first, we have a general result which holds without any special restrictions on the target geometry:
\begin{customthm}{B}[\textbf{Theorem \ref{Theorem full quasimap Lefschetz}}] \label{thmB} Let $X$ be a smooth projective toric variety and $Y \subseteq X$ a smooth very ample hypersurface. Then there is an explicit algorithm to recover the (restricted) quasimap invariants of $Y$, as well as the relative invariants of $(X,Y)$, from the quasimap invariants of $X$.\end{customthm}

Even in the situation where the quasimap theories of $X$ and $Y$ coincide with the (respective) Gromov--Witten theories, the quasimap algorithm is much more efficient than the Gromov--Witten algorithm, due to the absence of rational tails. It is clear that this phenomenon generalises to the setting of the quasimap degeneration formula (once such a result has been established); we plan to revisit this in future work.

The second form of the quantum Lefschetz theorem is an explicit relation between generating functions, obtained by applying Theorem \ref{thmB} in the semipositive context.
\begin{customthm}{C}[\textbf{Theorem \ref{Theorem Quantum Lefschetz}}]\label{thmC}
Let $X$ be a smooth projective toric Fano variety and let $i\colon Y \hookrightarrow X$ be a very ample hypersurface. Assume that $-K_Y$ is nef and that $Y$ contains all curve classes (see \S \ref{Subsection setup}). Then
\begin{equation*}
\tilde{S}_0^Y(z,q) = \dfrac{\sum_{\beta\geq 0} q^\beta\left(\prod_{j=0}^{Y\cdot\beta}(Y+jz)\right)\cdot \mathscr{S}^X_\beta(z)}{P_0^X(q)}
\end{equation*}
where $\tilde{S}_0^Y(z,q)$ and $\mathscr{S}^X_\beta(z)$ are the following generating functions for $2$-pointed quasimap invariants
\begin{align*}
\tilde{S}_0^Y(z,q)&=i_* \sum_{\beta \geq 0} q^\beta (\ev_1)_*\left(\frac{1}{z-\psi_1} \virt{\Q{0}{2}{Y}{\beta}}\right)\\
\mathscr{S}^X_\beta(z)&=(\ev_1)_*\left(\frac{1}{z-\psi_1} \virt{\Q{0}{2}{X}{\beta}}\right)
\end{align*}
and $P_0^X(q)$ is given by:
$$ P_0^X(q) = 1 + \sum_{\substack{\beta>0 \\ K_Y\cdot\beta=0}} q^\beta(Y\cdot\beta)!\langle [\pt_X] \psi_1^{Y\cdot\beta-1} ,\mathbbm 1_{X}\rangle_{0,2,\beta}^X.$$\end{customthm}
\noindent This is similar in spirit to the results of \cite{Ga-MF}; however, the stronger stability condition considerably simplifies both the proof and the final formula. This result can also be obtained as a consequence of \cite[Corollary~5.5.1]{CF-K-wallcrossing}; see \S \ref{Subsection CFK comparison}.

\subsection{Plan of the paper}
The plan of the paper is as follows. In \S \ref{Section relative stable quasimaps} we define the space of relative quasimaps, as a substack of the moduli space of (absolute) quasimaps:
\begin{equation*} \Q{0}{\alpha}{X|Y}{\beta} \hookrightarrow \Q{0}{n}{X}{\beta}. \end{equation*}
Here $X$ is a smooth toric variety, $Y$ is a smooth very ample hypersurface and $\alpha = (\alpha_1, \ldots, \alpha_n)$ encodes the orders of tangency of the marked points to $Y$. Note that we \emph{do not} require $Y$ to be toric.

The study of the relative geometry $(\PP^N,H)$, for $H \subseteq \PP^N$ a hyperplane, plays a fundamental role: in this case, the relative space is irreducible of the expected codimension $\Sigma_{i=1}^n \alpha_i$ (in fact, it is the closure of the so-called ``nice locus'' consisting of maps from a $\PP^1$ whose image is not contained inside $H$, and which satisfies the tangency conditions at the marked points). Thus it has an actual fundamental class, which we use to define relative quasimap invariants of the pair $(\PP^N,H)$. This fundamental class can also be pulled back to endow $\Q{0}{\alpha}{X|Y}{\beta}$ with a virtual  class, and hence we can define relative quasimap invariants of a general pair $(X,Y)$ as above.

In \S \ref{Section recursion for PN} we prove a recursion formula relating $[\Q{0}{\alpha}{\PP^N|H}{d}]$ with classes determined by lower numerical invariants. We make use of the \emph{comparison morphism}:
\begin{equation*} \comp : \M{0}{n}{\PP^N}{d} \to \Q{0}{n}{\PP^N}{d} \end{equation*} 
This restricts to a birational morphism between the relative spaces, which we use to push down Gathmann's formula to obtain a recursion formula for relative stable quasimaps.

In \S \ref{Section recursion formula in general case} we turn to the case of an arbitrary pair $(X,Y)$ with $Y$ very ample. We use the embedding $X \hookrightarrow \PP^N$ defined by $\OO_X(Y)$ to construct a virtual class $\virt{\Q{0}{\alpha}{X|Y}{\beta}}$.
We then prove the recursion formula for $(X,Y)$ by pulling back the formula for $(\PP^N,H)$. This requires several comparison theorems for virtual classes, extending results in Gromov--Witten theory to the setting of quasimaps. The full statement of the recursion formula is:

\begin{customthm}{D}[\textbf{Theorem \ref{Theorem general recursion}}] \label{thm recursion intro} Let $X$ be a smooth projective toric variety and let $Y \subseteq X$ be a very ample hypersurface (not necessarily toric). Then
\begin{equation*} (\alpha_k \psi_k + \ev_k^* [Y]) \cap \virt{\Q{0}{\alpha}{X|Y}{\beta}} = \virt{\Q{0}{\alpha+e_k}{X|Y}{\beta}} + \virt{\mathcal D^\mathcal{Q}_{\alpha,k}(X|Y,\beta)} \end{equation*}
in the Chow group of $\Q{0}{\alpha}{X|Y}{\beta}$. 
\end{customthm}
\noindent Here $\mathcal D^\mathcal{Q}_{\alpha,k}(X|Y,\beta)$ is a certain \emph{quasimap comb locus} sitting inside the boundary of the relative space; its virtual class should be thought of as a correction term. Such terms also appear in Gathmann's stable map recursion formula; however, in our setting the stronger stability condition for quasimaps considerably reduces the number of such contributions.

In \S \ref{Section quasimap mirror theorem} we apply Theorem \ref{Theorem general recursion} to prove the quasimap quantum Lefschetz theorems \ref{thmB} and \ref{thmC} discussed above. Finally in \S \ref{section wallcrossing} we apply the same result to prove Theorem \ref{thmA}, equating the relative $I$-function with a generating function for our relative quasimap invariants.

In Appendix~\ref{appendix:intersection} we define the \emph{diagonal pull-back} along a morphism whose target is smooth, and verify that it agrees with the more modern concept of virtual pull-back \cite{Manolache-Pull} when both are defined. The diagonal pull-back was employed implicitly in \cite{Ga}, but we find it useful here to give a more explicit treatment.

\subsection{Future directions}
We expect Theorem \ref{thmA} to extend to general simple normal crossings divisors and arbitrary genus. To this end, we are currently in the process of developing and applying a fully-fledged theory of \emph{logarithmic quasimaps}, taking inspiration from the theory of logarithmic stable maps \cite{GrossSiebertLog,ChenLog,AbramovichChenLog}. In this context, the ``relative mirror theorem'' will take the form of a wall-crossing formula between logarithmic quasimap and Gromov--Witten invariants, together with closed-form expressions for the quasimap generating functions.

\begin{acknowledgements}
We thank Tom Coates, Cristina Manolache and Dhruv Ranganathan for helpful advice concerning both exposition and subject matter. We are especially grateful to Ionu\c{t}~Ciocan-Fontanine for thoroughly examining a preliminary version of this manuscript and providing several clarifications and corrections. We would also like to thank Fabio Bernasconi, Andrea Petracci and Richard Thomas for useful conversations. We thank the referees for a number of valuable suggestions.

This work was supported by EPSRC, the Royal Society and the London School of Geometry and Number Theory.
\end{acknowledgements}

\subsection{Table of notation} We will use the following notation, most of which is introduced in the main body of the paper.\medskip

\tabulinesep=^0.4mm_0.4mm
\begin{longtabu}{r c c p{0.8\linewidth}}
$X$ & & & a smooth projective toric variety \\
$Y$ & & & a smooth very ample hypersurface in $X$ \\
$\Sigma$, $\Sigma(1)$ & & & the fan of $X$, and the set of $1$-dimensional cones of $\Sigma$ \\
$\rho$, $D_\rho$ & & & an element of $\Sigma(1)$, and the toric divisor in $X$ associated to it \\
$\M{g}{n}{X}{\beta}$ & & & the moduli space of stable maps to $X$ \\
$\M{0}{\alpha}{X|Y}{\beta}$ & & & the moduli space of relative stable maps to $(X,Y)$; see \S \ref{subsection relative maps} \\
$\Q{g}{n}{X}{\beta}$ & & & the moduli space of toric quasimaps to $X$; see \S \ref{subsection quasimaps} \\
$\mathcal{Q}^{\circ}_{0,\alpha}(X|Y,\beta)$ & & & the nice locus of relative quasimaps to $(X,Y)$; see \S \ref{Subsection basic properties of the moduli space} \\
$\Q{0}{\alpha}{X|Y}{\beta}$ & & & the moduli space of relative quasimaps to $(X,Y)$; see \S \ref{Subsection relative stable quasimaps} \\
$\mathcal{D}^{\mathcal{Q}}_{\alpha,k}(X|Y,\beta)$ & & & the quasimap comb locus; see \S \ref{Subsection recursion formula for PN} \\
$\mathcal{D}^{\mathcal{Q}}(X|Y,A,B,M)$ & & & (a component of) the comb locus; see \S \ref{Subsection recursion formula for PN} \\
$\mathcal{E}^{\mathcal{Q}}(X|Y,A,B,M)$ & & & the total product for the comb locus; see \S \ref{Subsection recursion formula for PN} \\
$\mathcal{D}^{\mathcal{Q}}(X,A,B)$ & & & the quasimap centipede locus; see \S \ref{Subsection recursion formula for PN} \\
$\mathcal{E}^{\mathcal{Q}}(X,A,B)$ & & & the total product for the centipede locus; see \S \ref{Subsection recursion formula for PN} \\
$\MM^{\operatorname{wt}}_{g,n}$ & & & the moduli stack of weighted prestable curves; see \S \ref{Subsection recursion formula for PN} \\
$\mathfrak{Bun}^{G}_{g,n}$ & & & the moduli stack of principal $G$-bundles on the universal curve over $\MM_{g,n}$; see Remark \ref{Section comparison with GIT construction} \\
$\om{Q}(f)$ & & & the push-forward morphism between quasimap spaces; see \S \ref{Subsection relative stable quasimaps} \\
$\chi$ & & & the comparison morphism from stable maps to quasimaps; see \S \ref{Subsection basic properties of the moduli space} \\
$f^!_{\text{v}}$ & & & virtual pull-back for $f$ virtually smooth; see Appendix \ref{appendix:intersection} \\
$f^!_{\Delta}$ & & & diagonal pull-back; see Appendix \ref{appendix:intersection}
\end{longtabu}
\section{Relative stable quasimaps} \label{Section relative stable quasimaps}
We begin with a brief recollection of the theories of stable quasimaps and relative stable maps, thus putting our work in its proper context.
\subsection{Stable quasimaps}\label{subsection quasimaps}
The moduli space of \ilemph{stable toric quasimaps} $\Q{g}{n}{X}{\beta}$ was constructed by I. Ciocan-Fontanine and B. Kim \cite{CF-K} as a compactification of the moduli space of smooth curves in a smooth and complete toric variety $X$. Roughly speaking, the objects are rational maps $C \dashrightarrow X$ where $C$ is a nodal curve, subject to a stability condition. The precise definition depends on the description of $X$ as a GIT quotient; see \cite[Definition~3.1.1]{CF-K} and \cite[Definition~3.1.1]{CFKM}.  The space $\Q{g}{n}{X}{\beta}$ is a proper Deligne--Mumford stack of finite type.  It admits a virtual fundamental class, which is used to define curve-counting invariants for $X$ called \ilemph{quasimap invariants}.

This theory agrees with that of stable quotients \cite{MOP} when both are defined, namely when $X$ is a projective space.  There is a common generalisation given by the theory of stable quasimaps to GIT quotients \cite{CFKM}. For simplicity, we will work mostly in the toric setting; however, this restriction is not essential for our arguments. Thus in this paper when we say ``quasimaps'' we are implicitly talking about toric quasimaps.
Quasimap invariants provide an alternative system of curve counts to the more well-known Gromov--Witten invariants. These latter invariants are defined via moduli spaces of stable maps, and as such we will often refer to them as \emph{stable map invariants}.

For $X$ sufficiently positive, the quasimap invariants coincide with the Gromov--Witten invariants, in all genera. This has been proven in the following cases:
\begin{itemize}[leftmargin=0.7cm]
\item $X$ a projective space or a Grassmannian: see \cite[Theorems~ 3~and~4]{MOP}, and \cite{ManolacheStable} for an alternative proof.
\item $X$ a projective complete intersection of Fano index at least $2$: see \cite[Corollary 1.7]{CF-K-MirrorSymmetry}, and \cite{CZ-mirror} for an earlier approach.
\item $X$ a projective toric Fano variety: see \cite[Corollary 1.3]{CF-K-higher-genus}.
\end{itemize}
In general, however, the invariants differ, the difference being encoded by certain wall-crossing formulae, which can be interpreted in the context of toric mirror symmetry \cite{CF-K-wallcrossing}.

\subsection{Relative stable maps}\label{subsection relative maps}
Let $Y$ be a smooth very ample hypersurface in a smooth projective variety $X$. In \cite{Ga} A. Gathmann constructs a space of relative stable maps to the pair $(X,Y)$ as a closed substack of the moduli space of (absolute) stable maps to $X$:
The relative space parametrises stable maps with prescribed tangencies to $Y$ at the marked points.  Unfortunately this space does not admit a natural perfect obstruction theory. Nevertheless, because $Y$ is very ample it is still possible to construct a virtual fundamental class by intersection-theoretic methods, and hence one can define relative stable map invariants.
Gathmann establishes a recursion formula for these virtual classes which allows one to express any relative invariant of $(X,Y)$ in terms of absolute invariants of $Y$ and relative invariants with lower contact multiplicities. By successively increasing the contact multiplicities from zero to the maximum possible value, this gives an algorithm expressing the (restricted) invariants of $Y$ in terms of those of $X$: see \cite[Corollary 5.7]{Ga}. In \cite{Ga-MF} this result is applied to give an alternative proof of the mirror theorem for projective hypersurfaces \cite{Givental-equivariantGW} \cite{LLY1}.

\subsection{Definition of relative stable quasimaps} \label{Subsection relative stable quasimaps}

For the rest of the paper, $X$ will denote a smooth projective toric variety and $Y \subseteq X$ a smooth very ample hypersurface. We \emph{do not} require that $Y$ is toric.
Consider the line bundle $\OO_X(Y)$ and the section $s_Y$ cutting out $Y$. By \cite{CoxRing} we have a natural isomorphism of $\CC$-vector spaces
\begin{equation*} \HH^0(X,\OO_X(Y)) = \left\langle \prod_{\rho} z_\rho^{a_\rho} : \Sigma_\rho a_\rho [D_\rho] = [Y] \right\rangle_{\CC} \end{equation*}
where the $z_\rho$ for $\rho \in \Sigma(1)$ are the generators of the Cox ring of $X$ and the $a_\rho$ are non-negative integers. We can therefore write $s_Y$ as
\begin{equation*} s_Y = \sum_{\underline{a}=(a_\rho)} \lambda_{\underline{a}} \prod_\rho z_\rho^{a_\rho} \end{equation*}
where the $\underline{a} = (a_\rho) \in \N^{\Sigma(1)}$ are exponents and the $\lambda_{\underline{a}}$ are scalars. The idea is that a quasimap
\begin{equation*} \big((C,x_1,\ldots,x_n), (L_\rho,u_\rho)_{\rho \in \Sigma(1)}, (\varphi_m)_{m \in M}\big) \end{equation*}
should ``map'' a point $x \in C$ into $Y$ if and only if the section
\begin{equation} \label{uY expression} u_Y := \sum_{\underline{a}} \lambda_{\underline{a}} \prod_\rho u_\rho^{a_\rho} \end{equation}
vanishes at $x$. We now explain how to make sense of expression \eqref{uY expression}. For each exponent $\underline{a}$ appearing in $s_Y$ we have a well-defined section:
\begin{equation*} u_{\underline{a}} := \lambda_{\underline{a}} \prod_\rho u_\rho^{a_\rho} \in \HH^0(C,\otimes_\rho L_\rho^{\otimes a_\rho}) \end{equation*}
Furthermore, given two such $\underline{a}$ and $\underline{b}$, since $\sum_\rho a_\rho [D_\rho] = [Y] = \sum_\rho b_\rho [D_\rho]$ in $\Pic{X}$ it follows from the exact sequence
\begin{equation*} 0 \to M \to \Z^{\Sigma(1)} \to \Pic{X} \to 0 \end{equation*}
that $\underline{a}$ and $\underline{b}$ differ by an element $m$ of $M$. Thus the isomorphism $\varphi_m$ allows us to view the sections $u_{\underline{a}}$ and $u_{\underline{b}}$ as sections of the same bundle, which we denote by $L_Y$ (there is a choice for $L_Y$ here, but up to isomorphism it does not matter). We can thus sum the $u_{\underline{a}}$ together to obtain $u_Y$.

The upshot is that we obtain a line bundle $L_Y$ on $C$, which plays the role of the ``pull-back'' of $\OO_X(Y)$ along the ``map'' $C \to X$, and a global section
\begin{equation*} u_Y \in \HH^0(C,L_Y) \end{equation*}
which plays the role of the ``pull-back'' of $s_Y$. With this at hand, we are ready to give the main definition of this paper. We begin with the case $X=\PP^N$ and $Y=H=\{ z_0 = 0\}$ a co-ordinate hyperplane. In this situation, the discussion above simplifies significantly.
\begin{dfn} Fix a number $n\geq 2$ of marked points, a degree $d\geq 0$ and a vector $\alpha=(\alpha_1, \ldots, \alpha_n)\in\mathbb N^n$ of tangency orders such that $\Sigma_i \alpha_i \leq d$.
 The space of \emph{relative stable quasimaps} $\Q{0}{\alpha}{\PP^N|H}{d}$ is the closure inside $\Q{0}{n}{\PP^N}{d}$ of the so-called \emph{nice locus}, consisting of quasimaps with smooth source curve
 \begin{equation*} \left( (\PP^1,x_1,\ldots,x_n),u_0,\ldots,u_N \right), \qquad u_i \in \HH^0(\PP^1,\OO_{\PP^1}(d))\end{equation*}
 such that:
 \begin{enumerate}
 	\item $u_0 \not\equiv 0$;
 	\item the relation $u_0^*(0)\geq\sum_i \alpha_ix_i$ holds in $\Achow_*(u_0^{-1}(0))$;
 	\item the sections $(u_0,\ldots,u_N)$ do not vanish simultaneously on $\PP^1$ (that is, there are no basepoints).
 \end{enumerate} \end{dfn}
\noindent In the above definition $u_0^*(0) := 0^!([\PP^1])\in\Achow_*(u_0^{-1}(0))$ is obtained via Fulton's refined Gysin map \cite[\S 2.6]{FUL} applied to the diagram:
\bcd
u_0^{-1}(0) \ar[r] \ar[d] \ar[rd,phantom,"\square"] & \PP^1 \ar[d,"u_0"] \\
\PP^1 \ar[r,"0"] & \OO_{\PP^1}(d)
\ecd

In the general case, the complete linear system $|\OO_X(Y)|$ defines an embedding $i \colon X \hookrightarrow \PP^N$ such that $i^{-1}(H) = Y$ for some hyperplane $H$. By the functoriality property of quasimap spaces (see \cite[\S 3.1]{CF-K-wallcrossing}) we have a map
\begin{equation*} k = \om{Q}(i) \colon \Q{0}{n}{X}{\beta} \to \Q{0}{n}{\PP^N}{d} \end{equation*}
where $d=i_*\beta$. 
\begin{dfn} As before, fix a number $n \geq 2$ of marked points, an effective curve class $\beta \in \HH_2^+(X)$ and a vector $\alpha=(\alpha_1,\ldots,\alpha_n)\in\mathbb N^n$ of tangency orders such that $\Sigma_i\alpha_i\leq Y\cdot\beta$. The \ilemph{space of relative stable quasimaps}
\begin{equation*} \Q{0}{\alpha}{X|Y}{\beta} \subseteq \Q{0}{n}{X}{\beta} \end{equation*}
is defined as the following fibre product:
\bcd
\Q{0}{\alpha}{X|Y}{\beta}\ar[d]\ar[r,hook]\ar[dr,phantom,"\Box"] & \Q{0}{n}{X}{\beta}\ar[d,"k"] \\
\Q{0}{\alpha}{\PP^N|H}{d}\ar[r,hook] & \Q{0}{n}{\PP^N}{d}
\ecd
\end{dfn}

\begin{remark}\label{Example of non-injectivity}
I. Ciocan-Fontanine has kindly pointed out that, contrary to the case of stable maps, $k$ might not be a closed embedding, even though $i$ is. For instance, consider the Segre embedding
\begin{align*}
\PP^1\times\PP^1 & \xhookrightarrow{i} \PP^3\\ 
([x:y],[z:w]) & \mapsto [xz:xw:yz:yw]\end{align*}
and the induced morphism between quasimap spaces:
\begin{equation*} k\colon \Q{0}{3}{\PP^1\times\PP^1}{(2,2)}\to\Q{0}{3}{\PP^3}{4} \end{equation*}
If we take the following two objects of $\Q{0}{3}{\PP^1 \times \PP^1}{(2,2)}$:
\begin{align*}
  \left(\left(\PP^1_{[s:t]},0,1,\infty\right),\left(L_1=\OO_{\PP^1}(2),u_1=s^2 ,v_1=st\right),\left(L_2=\OO_{\PP^1}(2), u_2=st,v_2=t^2\right)\right)\\
  \left(\left(\PP^1_{[s:t]},0,1,\infty\right),\left(L_1=\OO_{\PP^1}(2),u_1=st ,v_1=t^2 \right),\left(L_2=\OO_{\PP^1}(2), u_2=s^2,v_2=st\right)\right)
\end{align*}
then these two quasimaps are non-isomorphic, but they both map to the same object under $k$, namely:
 \[
   \left(\left(\PP^1_{[s:t]},0,1,\infty\right),\left(L=\OO_{\PP^1}(4),z_0=s^3t,z_1=s^2t^2,z_2=s^2t^2,z_3=st^3\right)\right)
 \]
Notice that this only happens on the locus of quasimaps with basepoints.
\end{remark}

\begin{remark} The above discussion also makes sense for $\epsilon$-stable quasimaps where $\epsilon > 0$ is an arbitrary rational number. We therefore have a notion of \emph{$\epsilon$-stable relative quasimap}. For $\epsilon=0+$ we recover relative quasimaps as above, whereas for $\epsilon>1$ we recover relative stable maps in the sense of Gathmann.

For simplicity we restrict ourselves to the case $\epsilon=0+$. However, all of the arguments can be adapted to the general case. As $\epsilon$ increases, the recursion formula (see \S \ref{Section recursion formula in general case}) becomes progressively more complicated due to the presence of rational tails of lower and lower degree. Consequently the quantum Lefschetz theorem (see \S \ref{Section quasimap mirror theorem}) also becomes more complicated.
\end{remark}

\begin{remark} To avoid confusion, we remark that the spaces defined above are not the same as the spaces defined in \cite[\S 6]{OkounkovKTheory}, which also go by the name of ``relative quasimaps.''
\end{remark}

\subsection{Basic properties of the moduli space} \label{Subsection basic properties of the moduli space}
\begin{lem}
 $\Q{0}{\alpha}{\PP^N|H}{d}$ is irreducible of codimension $\sum_i\alpha_i$ in $\Q{0}{n}{\PP^N}{d}$.
\end{lem}
\begin{proof} Since $\Q{0}{\alpha}{\PP^N|H}{d}$ is defined as the closure of the nice locus, it is enough to show that the nice locus itself is irreducible of the correct dimension. The quasimaps which appear in the nice locus have no basepoints, and as such may be viewed simply as stable maps. This identifies the nice locus with a locally closed substack of $\M{0}{n}{\PP^N}{d}$, and then \cite[Lemma 1.8]{Ga} applies to show that this has the desired properties.\end{proof}

Recall that there exists a comparison morphism
\begin{equation*}\chi : \M{0}{n}{\PP^N}{d} \to \Q{0}{n}{\PP^N}{d} \end{equation*}
which has the effect of contracting each rational tail and introducing a basepoint at the corresponding node, with multiplicity equal to the degree of the rational tail. (For more details, see \cite[Theorem 3]{MOP} and \cite[\S 4.3]{Manolache-Push}; earlier manifestations of these ideas can be found in \cite{Bertram} and \cite{Popa-Roth}.)

\begin{lem}\label{Comparison morphism birational}
 The comparison morphism $\chi$ restricts to a proper and birational morphism \[\chi_\alpha\colon \M{0}{\alpha}{\PP^N|H}{d}\to \Q{0}{\alpha}{\PP^N|H}{d}.\]
\end{lem}
\begin{proof} Let $\chi_\alpha$ denote the restriction of $\chi$ to $\M{0}{\alpha}{\PP^N|H}{d}$. The fact that this factors through $\Q{0}{\alpha}{\PP^N|H}{d}$ can be deduced by applying $\chi_\alpha$ to a smoothing family of any relative stable map. Moreover $\chi_\alpha$ is an isomorphism over the nice locus, which is an open dense subset of both source and target. It follows that $\chi_\alpha$ is proper and birational (and hence also surjective).\end{proof}

Since the moduli space of relative quasimaps is irreducible of the correct dimension, it has a fundamental class which we can use to define \emph{relative quasimap invariants} for the pair $(\PP^N,H)$:
\begin{equation*} \left\langle \gamma_1 \psi_1^{k_1} , \ldots, \gamma_n \psi_n^{k_n} \right\rangle_{0,\alpha,d}^{\PP^N|H} := \int_{[\Q{0}{\alpha}{\PP^N|H}{d}]} \prod_{i=1}^n \ev_i^* \gamma_i \cdot \psi_i^{k_i} \end{equation*}

\begin{remark}
The relative quasimap invariants of $(\PP^N,H)$ agree with the relative Gromov--Witten invariants of $(\PP^N,H)$, since the birational map $\chi_\alpha$ preserves the fundamental classes (the psi classes pull back along $\chi_\alpha$ by Lemma \ref{lem:compare_psi} below). Note that if we set $\alpha=(0,\ldots,0)$ we recover the classical comparison theorem for the absolute Gromov--Witten and quasimap invariants of projective space.
\end{remark}

We will now use the above results to define relative quasimap invariants in general. Since the absolute space $\Q{0}{n}{\PP^N}{d}$ is unobstructed, the morphism $k\colon\Q{0}{n}{X}{\beta}\to \Q{0}{n}{\PP^N}{d}$ admits a natural relative perfect obstruction theory, and so there is a virtual pull-back morphism $k^!_{\operatorname{v}}$. Alternatively, we may use the presence of a virtual class on $\Q{0}{n}{X}{\beta}$ and the smoothness of $\Q{0}{n}{\PP^N}{d}$ to define a diagonal pull-back morphism $k^!_{\Delta}$. The discussion in Appendix \ref{appendix:intersection} shows that these two maps coincide, and from now on we will denote them both by $k^!$. We then define the virtual class on $\Q{0}{\alpha}{X|Y}{\beta}$ by pull-back along $k$
\begin{equation*} \virt{\Q{0}{\alpha}{X|Y}{\beta}} := k^! [ \Q{0}{\alpha}{\PP^N|H}{d} ] \end{equation*}
and use this class to define relative quasimap invariants in general:
\begin{equation*} \left\langle \gamma_1 \psi_1^{k_1} , \ldots, \gamma_n \psi_n^{k_n} \right\rangle_{0,\alpha,\beta}^{X|Y} := \int_{\virt{\Q{0}{\alpha}{X|Y}{\beta}}} \prod_{i=1}^n \ev_i^* \gamma_i \cdot \psi_i^{k_i}. \end{equation*}

It will be important for the statement of the recursion formula to provide a description of the geometric points of $\Q{0}{\alpha}{X|Y}{\beta}$. Recall the notation introduced at the beginning of \S \ref{Subsection relative stable quasimaps}.
\begin{lem}[Combinatorial description] Let
\begin{equation*} \big((C,x_1,\ldots,x_n), (L_\rho,u_\rho)_{\rho \in \Sigma(1)}, (\varphi_m)_{m \in M}\big) \in \Q{0}{n}{X}{\beta}\end{equation*}
be a quasimap to $X$. This belongs to the relative space $\Q{0}{\alpha}{X|Y}{\beta}\subseteq \Q{0}{n}{X}{\beta}$ if and only if, for every connected component $Z$ of $u_Y^{-1}(0) \subseteq C$, the following conditions hold:
\begin{enumerate}
\item[(i)] if $Z$ consists of an isolated marked point $x_i$, then $u_Y^*(0)$ has order at least $\alpha_i$ at $x_i$;
\item[(ii)] if $Z$ is a (possibly reducible) subcurve of $C$, and if we let $C^{(i)}$ for $1 \leq i \leq r$ denote the irreducible components of $C$ adjacent to $Z$, and $m^{(i)}$ the multiplicity of $u_Y|_{C^{(i)}}^*(0)$ at the unique node $Z \cap C^{(i)}$, then:
\begin{equation} \label{Relative quasimap internal component inequality} \deg(L_{Y}|_Z) + \sum_{i=1}^r m^{(i)} \geq \sum_{x_i \in Z} \alpha_i. \end{equation}
\end{enumerate}
\end{lem}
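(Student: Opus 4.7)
The plan is to reduce first to the prototype case $(X,Y)=(\PP^N,H)$ via the fibre-product definition, and then exploit the surjectivity of the comparison morphism $\chi_\alpha$ (Lemma~\ref{Comparison morphism birational}) to transport Gathmann's analogous combinatorial description of relative stable maps across the contraction of rational tails.

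For the reduction: a quasimap $q$ to $X$ lies in $\Q{0}{\alpha}{X|Y}{\beta}$ if and only if $k(q)$ lies in $\Q{0}{\alpha}{\PP^N|H}{d}$, where $k=\om{Q}(i)$. The functoriality of $k$ identifies the line bundle $L_Y$ and the section $u_Y$ on $q$ with the zeroth coordinate line bundle and section of $k(q)$, so all the combinatorial data appearing in conditions (1)--(2)---the connected components of $u_Y^{-1}(0)$, the vanishing orders at the markings, the degree $\deg(L_Y|_Z)$, and the node multiplicities $m^{(i)}$---transforms identically under $k$. Hence it suffices to prove the lemma when $(X,Y)=(\PP^N,H)$.

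In this prototype case, Lemma~\ref{Comparison morphism birational} guarantees that every geometric point $q \in \Q{0}{\alpha}{\PP^N|H}{d}$ is of the form $\chi_\alpha(f)$ for some relative stable map $f \in \M{0}{\alpha}{\PP^N|H}{d}$, and Gathmann's combinatorial characterisation \cite{Ga} describes the source by inequalities of exactly the same shape as (1)--(2), stated on the connected components of $f^{-1}(H) \subseteq \tilde C$. The work is then to compare these inequalities across the contraction $\chi$. Each rational tail $T \subseteq \tilde C$ gets contracted to a node of $C$ and replaced by a basepoint of multiplicity $\deg(f|_T)$. If $T$ maps into $H$, then the image component $Z$ of $u_Y^{-1}(0)$ on $C$ absorbs $T$, so the LHS of \eqref{Relative quasimap internal component inequality} aggregates the corresponding Gathmann terms; if $T$ does not map into $H$, then $\deg(f|_T)$ contributes instead to the external multiplicity $m^{(i)}$ at the attaching node. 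In either case a direct bookkeeping shows the inequalities agree term by term, and condition (1) is preserved because quasimap stability forbids markings on contracted tails.

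The main obstacle lies in the converse direction: given a quasimap $q$ satisfying (1)--(2), one must construct a preimage $f \in \M{0}{\alpha}{\PP^N|H}{d}$ by expanding each basepoint of multiplicity $m$ into a rational tail of degree $m$, distributing the tangency data at the surrounding markings so that Gathmann's inequality holds on every stable-map component. Inequality \eqref{Relative quasimap internal component inequality} provides exactly the budget required for this redistribution, but one must verify that it can be carried out consistently: a single quasimap component $Z$ may split under expansion into several stable-map components linked by contracted tails mapping into $H$, and the Gathmann inequality must hold on each of them. This can be arranged by choosing the expansion greedily, assigning degrees to each tail so as to saturate the local Gathmann inequality, with the global inequality \eqref{Relative quasimap internal component inequality} ensuring enough total tangency to distribute.
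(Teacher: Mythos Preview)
Your reduction to $(\PP^N,H)$ via the fibre-product definition is exactly the paper's first move. The two implications thereafter are handled differently.

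For the forward direction (membership implies conditions (1)--(2)), the paper does not go through surjectivity of $\chi_\alpha$ and bookkeeping across the contraction. Instead it observes that conditions (1)--(2) hold on the nice locus by definition and are closed (conservation of number), hence persist on the closure $\Q{0}{\alpha}{\PP^N|H}{d}$. This is shorter and sidesteps the degree/multiplicity tracking entirely; your route via Gathmann's characterisation upstairs should also work, but the bookkeeping you gesture at (``aggregates the corresponding Gathmann terms'', ``contributes instead to the external multiplicity'') is only sketched.

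For the converse, your idea---expand each basepoint into a rational tail and land in Gathmann's space---is the paper's, but the paper's construction is concrete and avoids your complications: at a basepoint $x$ of multiplicity $m$, adjoin a rational tail mapping by an $m$-fold cover of a line in $\PP^N$ through the limiting image of $x$, totally ramified at the attaching node. One then invokes \cite[Proposition~1.14]{Ga} to see the result lies in $\M{0}{\alpha}{\PP^N|H}{d}$, and concludes via Lemma~\ref{Comparison morphism birational}. There is no tangency data to redistribute and no greedy algorithm to run: markings are never on rational tails, so the tangencies at the $x_i$ are untouched by the expansion. Your worry that a quasimap component $Z$ might ``split'' under expansion does not arise, since each tail is attached at a single smooth point and one may choose the covering line \emph{not} to lie in $H$, making every new tail an external component rather than part of the internal locus. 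The ``greedy distribution'' paragraph is addressing a difficulty that the right choice of tail avoids.
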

\begin{proof}
From the definition of $\Q{0}{\alpha}{X|Y}{\beta}$, we see that it is sufficient to prove the statement in the case $(X,Y)=(\PP^N,H)$. These conditions are satisfied on the nice locus by definition, and so continue to be satisfied on the closure $\Q{0}{\alpha}{\PP^N|H}{d}$ by the conservation of number principle. On the other hand, suppose we are given a quasimap satisfying these conditions. If $x$ is a basepoint of this quasimap, of multiplicity $m$, then we may adjoin a rational tail to the source curve at $x$, and define a map from this rational tail to $\PP^N$ by choosing a line in $\PP^N$ through the image of $x$ and taking an $m$-fold cover of this line, totally ramified at the point of intersection of the rational tail with the rest of the curve. In this way we obtain a stable map which maps down to our original quasimap under $\chi$. Moreover this stable map belongs to $\M{0}{\alpha}{\PP^N|H}{d}$ by \cite[Proposition 1.14]{Ga}; hence by Lemma \ref{Comparison morphism birational} our quasimap belongs to $\Q{0}{\alpha}{\PP^N|H}{d}$, as required.
\end{proof}

\section{Recursion formula}\label{section recursion}\label{section recursion formula}
\subsection{Recursion formula for $(\PP^N,H)$} \label{Section recursion for PN}
\label{Subsection recursion formula for PN}
We wish to obtain a recursion formula relating the quasimap invariants of multiplicity $\alpha$ with the quasimap invariants of multiplicity $\alpha + e_k$, as in \cite[Theorem 2.6]{Ga}. For $m = \alpha_k + 1$ the following section (of the pull-back of the jet bundle of the universal line bundle)
\[
\sigma^m_k := x_k^*d^m_{\mathcal C/\om{Q}}(u_0)\in \HH^0(\om{Q},x_k^*\mathcal P^m_{\mathcal C/\mathcal Q}(\mathcal L))
\]
vanishes along $\Q{0}{\alpha+e_k}{\PP^N|H}{d}$ inside $\om{Q} = \Q{0}{\alpha}{\PP^N|H}{d}$, and also along a number of \ilemph{comb loci}.  The latter parametrise quasimaps for which $x_k$ belongs to an internal component $Z \subseteq C$ (a connected component of the vanishing locus of $u_0$), such that:
\begin{equation*}\deg(L|_{Z})+\sum_{i=1}^r m^{(i)}=\sum_{x_i\in Z}\alpha_i \end{equation*}
The strong stability condition means that quasimaps in the comb loci cannot contain any rational tails; this is really the only difference with the case of stable maps.

Indeed, we can push forward Gathmann's recursion formula for stable maps along the comparison morphism
\begin{equation*} \comp \colon \M{0}{\alpha}{\PP^N|H}{d}\to\Q{0}{\alpha}{\PP^N|H}{d} \end{equation*}
and, due to Corollary \ref{Comparison morphism birational} above, the only terms which change are the comb loci containing rational tails. In fact these disappear, since the restriction of the comparison map to these loci has positive-dimensional fibres:
\begin{lem}\label{lem:posdimfiber} Consider a rational tail component in the comb locus of the moduli space of stable maps, i.e. a moduli space of the form:
\begin{equation*} \M{0}{(m^{(i)})}{\PP^N|H}{d} \end{equation*}
and assume that $Nd>1$.  Then
\begin{equation*} \dim \left( [\M{0}{(m^{(i)})}{\PP^N|H}{d}] \cap \ev_1^*(\pt_H) \right) > 0 \end{equation*}
where $\pt_H \in \Achow^{N-1}(H)$ is a point class. Thus the pushforward along $\comp$ of any comb locus with a rational tail is zero.
\end{lem}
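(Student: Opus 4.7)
The plan is a straightforward dimension count, whose only non-trivial ingredient is Gathmann's irreducibility theorem. To compute the dimension of $\M{0}{(m^{(i)})}{\PP^N|H}{d}$, I would invoke Gathmann's Lemma 1.8 applied to $\alpha=(m^{(i)})$: the nice locus is irreducible of pure codimension $m^{(i)}$ in $\M{0}{1}{\PP^N}{d}$, hence the closure $\M{0}{(m^{(i)})}{\PP^N|H}{d}$ has dimension
\[
(N+1)d + N - 2 - m^{(i)},
\]
using that $\M{0}{1}{\PP^N}{d}$ is smooth of dimension $(N+1)d+N-2$.

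Next I would intersect with $\ev_1^*(\pt_H)$. The tangency $m^{(i)}\geq 1$ forces the marked point to land in $H$, and $\pt_H \in \Achow^{N-1}(H)$ pulls back to a class of codimension $N-1$. So the intersection has dimension at least
\[
(N+1)d + N - 2 - m^{(i)} - (N-1) = (N+1)d - 1 - m^{(i)}.
\]
The remaining step is a numerical one: since $u_0$ is a section of $\OO_{\PP^1}(d)$, it has at most $d$ zeros counted with multiplicity, so $m^{(i)}\leq d$. Thus the dimension is bounded below by $Nd - 1$, which is strictly positive under the hypothesis $Nd > 1$.

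For the vanishing of $\comp_*$ on comb loci containing a rational tail, I would argue as follows. The comparison morphism contracts the rational tail and replaces it by a basepoint of multiplicity $m^{(i)}$ located at the image in $H$ of the attaching node. Consequently, restricted to the rational tail factor, the fibre of $\comp$ over a given quasimap parametrises precisely the relative stable maps in $\M{0}{(m^{(i)})}{\PP^N|H}{d}$ whose marked point is sent to the prescribed image point in $H$. By the first part of the lemma, this fibre has positive dimension, so $\comp$ has positive-dimensional generic fibres on any comb locus with a rational tail, and its pushforward vanishes in the Chow group. No step in this plan is a genuine obstacle: the argument is essentially bookkeeping around Gathmann's Lemma~1.8 and the elementary bound $m^{(i)}\leq d$.
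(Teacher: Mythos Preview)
Your proof is correct and follows essentially the same approach as the paper: both reduce to the dimension count $\dim\M{0}{(m^{(i)})}{\PP^N|H}{d} - (N-1) = (Nd-1) + (d-m^{(i)})$, then invoke $m^{(i)} \leq d$ together with $Nd>1$. Your explanation of why the pushforward vanishes (positive-dimensional fibres of $\chi$ over the contracted rational tail) is exactly what the paper intends, though the paper leaves this implicit in the lemma statement.
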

\begin{proof} This is a simple dimension count. We have
\begin{align*} \dim \left( [\M{0}{(m^{(i)})}{\PP^N|H}{d}] \cap \ev_1^*(\pt_H) \right) & =(N-3)+d(N+1)+(1-m^{(i)})-(N-1) \\
& =(Nd-1)+(d-m^{(i)})
\end{align*}
from which the lemma follows because $m^{(i)} \leq d$.
\end{proof}
\begin{remark} With an eye to the future, we remark that these rational tail components contribute nontrivially to the Gromov--Witten invariants of a Calabi--Yau hypersurface in projective space, and so their absence from the quasimap recursion formula accounts for the divergence between Gromov--Witten and quasimap invariants in the Calabi--Yau case \cite[Rmk. 1.6]{Ga-MF}. \end{remark}

Since we wish to apply the projection formula to Gathmann's recursion relation, we should express the cohomological terms which appear as pull-backs:
\begin{lem}\label{lem:compare_psi} We have:
\begin{align*} \comp^*(\psi_k) & =\psi_k \\
\comp^*(\ev_k^* H)& =\ev_k^* H
\end{align*}
\end{lem}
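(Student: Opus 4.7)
The second identity is essentially tautological. The comparison morphism $\comp$ does not contract any marked point, nor does it alter the value of the underlying rational map at $x_k$: the markings lie on the ``main'' components of the source curve, and contracting a rational tail merely introduces a basepoint at the resulting node, far from any $x_k$. Consequently $\ev_k^{\overline{\mathcal{M}}} = \ev_k^{\mathcal{Q}}\circ \comp$ as morphisms from $\M{0}{n}{\PP^N}{d}$ to $\PP^N$, and $\comp^*(\ev_k^*H) = \ev_k^*H$ follows by the functoriality of pullback in cohomology.

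The substance is in the first identity. Write $\pi^{\overline{\mathcal{M}}} \colon \overline{\mathcal{C}}\to \M{0}{n}{\PP^N}{d}$ and $\pi^{\mathcal{Q}}\colon \mathcal{C}\to\Q{0}{n}{\PP^N}{d}$ for the universal curves, so that $\psi_k = c_1(\sigma_k^*\omega_{\pi})$ in either case. The comparison morphism is induced by a partial contraction $\tilde{\comp}\colon \overline{\mathcal{C}}\to \comp^*\mathcal{C}$ of universal curves, whose non-trivial fibres are precisely the rational tails that fail the stronger quasimap stability. These tails consist of trees of rational components carrying \emph{no} marked points: a rational subtree that is contracted by $\comp$ must be unstable as a quasimap (too few special points relative to its degree) yet stable as a map, and this forces the absence of any $x_i$ on it. Hence the image of each section $\sigma_k^{\overline{\mathcal{M}}}$ is disjoint from the exceptional locus of $\tilde{\comp}$; in particular $\tilde{\comp}$ restricts to an isomorphism in a Zariski neighbourhood of $\sigma_k^{\overline{\mathcal{M}}}$, and the two sections $\sigma_k^{\overline{\mathcal{M}}}$ and $\sigma_k^{\mathcal{Q}}$ are intertwined by $\tilde{\comp}$.

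Granting this, the relative dualizing sheaves $\omega_{\pi^{\overline{\mathcal{M}}}}$ and $\tilde{\comp}^*\omega_{\pi^{\mathcal{Q}}}$ agree in the open neighbourhood of $\sigma_k^{\overline{\mathcal{M}}}$ where $\tilde{\comp}$ is an isomorphism; they may differ globally by a divisor supported on contracted tails, but such a divisor does not meet $\sigma_k^{\overline{\mathcal{M}}}$. Pulling back by the section then gives $(\sigma_k^{\overline{\mathcal{M}}})^*\omega_{\pi^{\overline{\mathcal{M}}}} \cong \comp^*(\sigma_k^{\mathcal{Q}})^*\omega_{\pi^{\mathcal{Q}}}$, and taking first Chern classes yields the desired equality $\psi_k = \comp^*(\psi_k)$. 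The only point deserving care is the geometric input that marked points avoid the contracted components, which I view as the essential difference between $\comp$ and forgetful morphisms on moduli of stable curves; here no boundary correction term appears precisely because the contractions occur far from the markings.
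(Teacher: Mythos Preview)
Your argument is correct and follows exactly the same idea as the paper's own proof, which consists of the single sentence ``The contraction of rational tails occurs away from the markings.'' You have simply unpacked this observation in detail, verifying that the contracted components carry no marked points and hence that both the evaluation maps and the cotangent line bundles at the markings are unaffected by~$\comp$.
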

\begin{proof}
The contraction of rational tails occurs away from the markings.
\end{proof}
\begin{prop} \label{Recursion formula for PN}
Define the \ilemph{quasimap comb locus} $\mathcal{D}^\mathcal{Q}_{\alpha,k}(\PP^N|H,d)$ as the union of the moduli spaces
\begin{equation*}
\mathcal{D}^{\mathcal{Q}}(\PP^N|H,A,B,M) := \Q{0}{A^{(0)} \cup \{q_1^0,\ldots,q_r^0\}}{H}{d_0}\times_{H^r}\prod_{i=1}^r \Q{0}{\alpha^{(i)}\cup (m^{(i)})}{\PP^N|H}{d_i}
\end{equation*}
where the union runs over all splittings $A =(A^{(0)},\ldots,A^{(r)})$ of the markings (inducing a splitting $(\alpha^{(0)}, \ldots, \alpha^{(r)})$ of the corresponding tangency conditions), $B = (d_0, \ldots, d_r)$ of the degree and all valid multiplicities $M = (m^{(1)}, \ldots, m^{(r)})$ such that the above spaces are all well-defined (in particular we require that $|A^{(0)}|+r$ and $|A^{(i)}|+1$ are all $\geq 2$) and such that

\[
d_0+\sum_{i=1}^r m^{(i)}=\sum \alpha^{(0)}
\]
Write $[\mathcal{D}^\mathcal{Q}_{\alpha,k}(\PP^N|H,d)]$ for the sum of the (product) fundamental classes, where each term is weighted by:
\begin{equation*}\dfrac{m^{(1)} \cdots m^{(r)}}{r!} \end{equation*}
Then
\[
(\alpha_k\psi_k+\ev_k^*H)\cdot[\Q{0}{\alpha}{\PP^N|H}{d}]=[\Q{0}{\alpha+e_k}{\PP^N|H}{d}]+[\mathcal{D}^\mathcal{Q}_{\alpha,k}(\PP^N|H,d)].
\]
\end{prop}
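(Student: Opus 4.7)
The plan is to push Gathmann's stable-map recursion formula \cite[Theorem 2.6]{Ga} forward along the proper birational comparison morphism $\chi_\alpha\colon\M{0}{\alpha}{\PP^N|H}{d}\to\Q{0}{\alpha}{\PP^N|H}{d}$ of Lemma \ref{Comparison morphism birational}, and then identify the surviving terms. Gathmann's formula reads
\begin{equation*}
(\alpha_k\psi_k+\ev_k^*H)\cap[\M{0}{\alpha}{\PP^N|H}{d}]=[\M{0}{\alpha+e_k}{\PP^N|H}{d}]+[\mathcal D_{\alpha,k}(\PP^N|H,d)],
\end{equation*}
and the strategy is to push each term forward using the projection formula, which applies because $\chi^*(\psi_k)=\psi_k$ and $\chi^*(\ev_k^*H)=\ev_k^*H$ by Lemma \ref{lem:compare_psi}.

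For the left-hand side and the first summand on the right, the fact that $\chi_\alpha$ and $\chi_{\alpha+e_k}$ are proper and birational between irreducible spaces of the same dimension immediately gives
\begin{equation*}
\chi_{\alpha*}[\M{0}{\alpha}{\PP^N|H}{d}]=[\Q{0}{\alpha}{\PP^N|H}{d}],\qquad \chi_{(\alpha+e_k)*}[\M{0}{\alpha+e_k}{\PP^N|H}{d}]=[\Q{0}{\alpha+e_k}{\PP^N|H}{d}],
\end{equation*}
so by the projection formula these terms produce the corresponding quasimap classes.

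The substantive work lies in analysing $\chi_*[\mathcal D_{\alpha,k}(\PP^N|H,d)]$. Gathmann's comb locus decomposes into components of the form
\begin{equation*}
\M{0}{A^{(0)}\cup\{q_1^0,\ldots,q_r^0\}}{H}{d_0}\times_{H^r}\prod_{i=1}^r\M{0}{\alpha^{(i)}\cup(m^{(i)})}{\PP^N|H}{d_i},
\end{equation*}
and I would split these into two classes depending on whether every pod $(\alpha^{(i)},(m^{(i)}),d_i)$ carries at least one extra marking beyond the attaching node. When every pod satisfies $|A^{(i)}|\ge 1$ (and the central component satisfies $|A^{(0)}|+r\ge 2$), the restriction of $\chi$ to each factor is itself proper and birational, by Lemma \ref{Comparison morphism birational} on each pod and the classical comparison theorem for the central component that maps entirely into $H$; a base-change argument then shows that the pushforward of the fibre product is the analogous quasimap fibre product, weighted by the same combinatorial factor $m^{(1)}\cdots m^{(r)}/r!$. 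These contributions assemble exactly into $[\mathcal D^\mathcal Q_{\alpha,k}(\PP^N|H,d)]$. If instead some pod has $|A^{(i)}|=0$, it is a pure rational-tail component of the form $\M{0}{(m^{(i)})}{\PP^N|H}{d_i}$, and Lemma \ref{lem:posdimfiber} shows that the $\chi$-image of such a factor (intersected with the $\ev^*\pt_H$ constraint imposed by gluing to the central $H$-component) has positive-dimensional fibres, so the pushforward vanishes.

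The main obstacle will be the base-change/fibre-product step: one must rigorously confirm that pushforward along a product of comparison morphisms commutes with the formation of the fibre product over $H^r$ at the level of Chow groups, and that the combinatorial bookkeeping of stability constraints, tangency-vector splittings $(\alpha^{(i)})$, degree splittings $(d_i)$ and multiplicity vectors $(m^{(i)})$ matches Gathmann's indexing on the nose. Once this is established, summing the surviving terms yields precisely $[\Q{0}{\alpha+e_k}{\PP^N|H}{d}]+[\mathcal D^\mathcal Q_{\alpha,k}(\PP^N|H,d)]$, as claimed.
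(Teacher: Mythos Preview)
Your proposal is correct and follows essentially the same route as the paper: push forward Gathmann's stable-map recursion along the proper birational comparison morphism, using Lemma~\ref{lem:compare_psi} to apply the projection formula, Lemma~\ref{Comparison morphism birational} to identify the main terms, and Lemma~\ref{lem:posdimfiber} to kill the rational-tail contributions. The base-change/fibre-product compatibility you flag as the main obstacle is exactly what the paper treats in Remark~\ref{Remark on definition of comb locus}, where the gluing morphism on the quasimap side is constructed so that the diagram of comb loci commutes.
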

\begin{proof}
This follows from \cite[Thm. 2.6]{Ga} by pushing forward along $\chi$, using the projection formula and Lemmas \ref{Comparison morphism birational}, \ref{lem:posdimfiber} and \ref{lem:compare_psi} .
\end{proof}

\begin{remark} \label{Remark on definition of comb locus} In the discussion above we have implicitly used the fact that there exists a commuting diagram of comb loci:
\bcd
\mathcal{D}^{\mathcal{M}}(\PP^N|H,A,B,M) \ar[r] \ar[d] & \M{0}{\alpha}{\PP^N|H}{d} \ar[d] \\
\mathcal{D}^{\mathcal{Q}}(\PP^N|H,A,B,M) \ar[r] & \Q{0}{\alpha}{\PP^N|H}{d}
\ecd
The vertical arrow on the left is a product of comparison morphisms (notice that $H\cong\PP^{N-1}$). The horizontal arrow at the top is the gluing morphism which glues together the various pieces of the comb to produce a single relative stable map. Here we explain how to define the corresponding gluing morphism for quasimaps, that is, the bottom horizontal arrow.

Suppose for simplicity that $r$, the number of teeth of the comb, is equal to $1$. Consider an element of the quasimap comb locus, consisting of two quasimaps:
\begin{align*} & \big((C^0,x^0_1,\ldots,x^0_{n_0},q^0),L^0,u^0_0, \ldots, u^0_N\big) \\
&\big((C^1,x^1_1,\ldots,x^1_{n_1},q^1),L^1,u^1_0, \ldots, u^1_N\big) \end{align*}
such that $u^0(q^0) = u^1(q^1)$ in $\PP^N$. We want to glue these quasimaps together at $q^0$,~$q^1$. The definition of the curve is obvious; we simply take:
\begin{equation*} C = C^0 \,_{q^0}\!\sqcup_{q^1} C^1 \end{equation*}
On the other hand, gluing together the line bundles $L^0$ and $L^1$ to obtain a line bundle $L$ over $C$ requires a choice of scalar $\lambda \in \Gm$, in order to match up the fibres over $q^i$. Furthermore, if the sections are to extend as well, then this scalar must be chosen in such a way that it takes $(u^0_0(q^0), \ldots, u^0_N(q^0)) \in (L^0_{q^0})^{\oplus (N+1)}$ to $(u^1_0(q^1), \ldots, u^1_N(q^1)) \in (L^1_{q^1})^{\oplus (N+1)}$. Since neither $q^0$ nor $q^1$ are basepoints (because they are markings), these tuples are nonzero, and so $\lambda$ is unique if it exists. Furthermore it exists if and only if these tuples belong to the same $\Gm$-orbit in $\Aaff^{N+1}$. This is precisely the statement that $u^0(q^0) = u^1(q^1) \in \PP^N$. Similar arguments apply for $r>1$, and for more general toric varieties. \end{remark}

\subsection{Recursion formula in the general case}\label{Section recursion formula in general case}

In this section we prove the main result of this paper: a recursion formula for relative quasimap invariants of a general pair $(X,Y)$.  

\begin{thm} \label{Theorem general recursion} Let $X$ be a smooth projective toric variety and let $Y \subseteq X$ be a very ample hypersurface (not necessarily toric). Then
\begin{equation*} (\alpha_k \psi_k + \ev_k^* [Y]) \cap \virt{\Q{0}{\alpha}{X|Y}{\beta}} = \virt{\Q{0}{\alpha+e_k}{X|Y}{\beta}} + \virt{\mathcal D^\mathcal{Q}_{\alpha,k}(X|Y,\beta)} \end{equation*}
in the Chow group of $\Q{0}{\alpha}{X|Y}{\beta}$. 
\end{thm}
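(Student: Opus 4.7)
The strategy is to pull back the recursion formula of Proposition~\ref{Recursion formula for PN} along the morphism $k \colon \Q{0}{n}{X}{\beta} \to \Q{0}{n}{\PP^N}{d}$ induced by the embedding $i \colon X \hookrightarrow \PP^N$ determined by the linear system $|\OO_X(Y)|$. Since $\Q{0}{n}{\PP^N}{d}$ is smooth, we may apply the diagonal (equivalently virtual) pull-back $k^!$ reviewed in Appendix~\ref{appendix:intersection}; this operation is compatible with capping by cohomology classes and enjoys functoriality with respect to compositions, exterior products, and Gysin pull-back along regular embeddings.

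On the left-hand side, the commutative square of evaluation maps against $i$ gives $k^*\ev_k^*H = \ev_k^*(i^*H) = \ev_k^*Y$, while $k^*\psi_k = \psi_k$ since the underlying prestable curve is unchanged when composing a quasimap with $i$, so the cotangent line at the $k$-th marking pulls back identically. Combined with the definition of the virtual class, this yields
\[
k^!\big((\alpha_k\psi_k + \ev_k^*H)\cap [\Q{0}{\alpha}{\PP^N|H}{d}]\big) = (\alpha_k\psi_k + \ev_k^*Y)\cap \virt{\Q{0}{\alpha}{X|Y}{\beta}}.
\]
The first term on the right-hand side is handled identically: $k^![\Q{0}{\alpha+e_k}{\PP^N|H}{d}] = \virt{\Q{0}{\alpha+e_k}{X|Y}{\beta}}$ directly from the definition.

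The remaining task is to identify $k^![\mathcal D^{\mathcal Q}_{\alpha,k}(\PP^N|H,d)]$ with $\virt{\mathcal D^{\mathcal Q}_{\alpha,k}(X|Y,\beta)}$. Each component $\mathcal D^{\mathcal Q}(\PP^N|H, A, B', M)$ is a fibre product over $H^r$ of absolute quasimap spaces to $H$ and relative quasimap spaces to $(\PP^N, H)$ with smaller numerical data; the $X$-side component $\mathcal D^{\mathcal Q}(X|Y, A, B, M)$ is defined analogously over $Y^r$, with $\beta$ split into effective classes $\beta_j$ satisfying $i_*\beta_j = d_j$. The virtual class on an $X$-side component is the Gysin pull-back along $\Delta_{Y^r} \subseteq Y^r \times Y^r$ of the exterior product of the virtual classes on the factors, each of which is by construction the $k$-pull-back of a fundamental class on the $\PP^N$-side. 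Using the compatibility of $k^!$ with exterior products and with Gysin pull-back along the regular embedding $i^r \colon Y^r \hookrightarrow H^r$, one shows that $k^!$ of a single $\PP^N$-side component decomposes as a sum over admissible lifts $B$ of $B'$ into $X$-side virtual classes, preserving the combinatorial weights $m^{(1)}\cdots m^{(r)}/r!$.

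The main obstacle is this last decomposition: several splittings $B = (\beta_0, \ldots, \beta_r)$ of $\beta$ on the $X$-side may project to the same $(d_0, \ldots, d_r)$ on the $\PP^N$-side, and one must verify that $k^!$ recovers each contribution exactly once with the correct multiplicity. This is settled by a set-theoretic analysis of the fibre product $\Q{0}{n}{X}{\beta} \times_{\Q{0}{n}{\PP^N}{d}} \mathcal D^{\mathcal Q}_{\alpha,k}(\PP^N|H,d)$, whose components are in bijection with the $X$-side comb components indexed by $\beta$-splittings lifting $B'$; once this identification is in place, the functoriality properties of $k^!$ collected in Appendix~\ref{appendix:intersection} deliver the equality of virtual classes, and combining the three ingredients proves the theorem.
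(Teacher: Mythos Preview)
Your overall strategy matches the paper's: pull back Proposition~\ref{Recursion formula for PN} along $k$, handle the left-hand side and the first right-hand term by compatibility of $k^!$ with $\psi_k$ and $\ev_k^*$, and then identify $k^!$ of the comb loci. The first two steps are correct and essentially verbatim from the paper.

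The gap is in the comb loci step. You invoke ``compatibility of $k^!$ with exterior products'' and the functoriality properties in Appendix~\ref{appendix:intersection}, but neither is what is needed. The map $k$ is a single morphism $\Q{0}{n}{X}{\beta}\to\Q{0}{n}{\PP^N}{d}$ on the \emph{glued} space; it is not a product of maps on the factors of the comb, so there is no a priori sense in which $k^!$ respects exterior products. What must be shown is that $k^!$ applied to the glued class $[\mathcal D^{\mathcal Q}(\PP^N|H,A,B',M)]$ agrees with the diagonal pull-back of the exterior product of the \emph{factor-wise} pull-backs defining $\virt{\mathcal D^{\mathcal Q}(X|Y,A,B,M)}$. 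Appendix~\ref{appendix:intersection} only provides commutativity of $k^!$ with Gysin maps and virtual pull-backs along \emph{other} morphisms; it says nothing about how $k^!$ interacts with the gluing map that assembles a comb into a single quasimap. Your set-theoretic identification of components is necessary but not sufficient: the issue is a comparison of virtual classes, not underlying sets.

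The paper closes this gap in Lemma~\ref{Comb loci pull back} by introducing the auxiliary \emph{centipede loci} $\mathcal D(X)$, $\mathcal E(X)$ (the absolute analogues with all factors mapping to $X$) and the gluing morphism $\psi$ over the stack $\MM^{\operatorname{wt}}_{A,B}$ of weighted prestable curves. The splitting axiom gives $\virt{\mathcal D(X)}=\psi^!\virt{\om Q(X)}$, and since $\psi^!$ is a genuine virtual pull-back, it commutes with $k^!$ by Appendix~\ref{appendix:intersection}; this yields $\virt{\mathcal D(X)}=k^![\mathcal D(\PP^N)]$. A second diagonal pull-back $\theta^!$ then passes from the centipede loci to the comb loci, and a chain of commutativity relations (displayed at the end of the proof of Lemma~\ref{Comb loci pull back}) finishes the identification. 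Without the centipede loci and the splitting axiom, the step you describe as ``compatibility with exterior products'' is an assertion, not an argument.
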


The formula is proven by pulling back the recursion for $(\PP^N,H)$ along $k=\om{Q}(i)$. Only the final term requires further discussion. As in the previous section, we define $\mathcal D^\mathcal{Q}_{\alpha,k}(X|Y,\beta)$ to be the union of the moduli spaces
\begin{equation*} \mathcal D^{\mathcal{Q}}(X|Y,A,B,M) := \Q{0}{A^{(0)} \cup \{q_1, \ldots, q_r\}}{Y}{\beta^{(0)}} \times_{Y^r} \prod_{i=1}^r \Q{0}{\alpha^{(i)}\cup (m_i)}{X|Y}{\beta^{(i)}} \end{equation*}
where the union runs over all splittings $A = (A^{(0)},\ldots,A^{(r)})$ of the markings (inducing a splitting $(\alpha^{(0)}, \ldots, \alpha^{(r)})$ of the corresponding tangency requirements), $B = (\beta^{(0)}, \ldots, \beta^{(r)})$ of the curve class $\beta$ and all valid multiplicities $M = (m^{(1)}, \ldots, m^{(r)})$ such that the above spaces are non-empty and such that:
\[
Y \cdot \beta^{(0)} +\sum_{i=1}^r m^{(i)}=\sum \alpha^{(0)}
\]
We refer to the $\mathcal D^{\mathcal{Q}}(X|Y,A,B,M)$ as \emph{comb loci}.

\begin{remark}\label{Section comparison with GIT construction}
\label{GIT comparison remark}Note that $Y$ is not in general toric, and so we should clarify the meaning of the factor
\begin{equation*} \Q{0}{A^{(0)} \cup \{ q_1, \ldots, q_n \}}{Y}{\beta^{(0)}} \end{equation*}
above. There are two possibilities here: one is to \emph{define} this space as the cartesian product
\bcd
\om{Q}_{0,n}(Y,\beta) \ar[r] \ar[d] \ar[rd,phantom,"\square"] & \om{Q}_{0,n}(H,d) \ar[d] \\
\om{Q}_{0,n}(X,\beta) \ar[r,"k"] & \om{Q}_{0,n}(\PP^N,d)
\ecd
and equip it with the virtual class pulled back along $k$:
\begin{equation*} \virt{\om{Q}_{0,n}(Y,\beta)} := k^! [ \om{Q}_{0,n}(H,d) ] \end{equation*}
Using this definition, $\om{Q}_{0,n}(Y,\beta)$ consists of those quasimaps in $\om{Q}_{0,n}(X,\beta)$ for which the section $u_Y$ (constructed in \S \ref{Subsection relative stable quasimaps}) is identically zero.
This has obvious advantages from the point of view of our computations, but is conceptually unsatisfying. 

On the other hand, in \cite{CFKM} moduli spaces of stable quasimaps are constructed for GIT quotient targets satisfying a number of conditions. Since $Y$ is a hypersurface in a toric variety, it has a natural presentation as such a GIT quotient
\begin{equation*} Y = C(Y) \sslash G \end{equation*}
where $C(Y)\subseteq \mathbb A^{\Sigma_X(1)}$ is the affine cone over $Y$ and $G=\Hom_{\mathbb Z}(\Pic(X),\Gm)\cong\Gm^{r_X}$ acts on $C(Y)$ via the natural inclusion
\begin{equation*} \Gm^{r_X}\hookrightarrow \Gm^{\Sigma_X(1)} \end{equation*}
(here $C(Y)\subseteq \mathbb{A}^{\Sigma_X(1)}$ is preserved by $G$ because it is cut out by a homogeneous polynomial in the Cox ring of $X$). Thus, we have two possible definitions of $\om{Q}_{0,n}(Y,\beta)$ and its virtual class; we will now show that they agree.

Objects of $\om{Q}^{\operatorname{GIT}}_{0,n}(Y,\beta)$ are diagrams of the form
\bcd
P\times_{G} C(Y) \ar[d,"p" left] \\
C \ar[u,bend right,"u"right]
\ecd
where $C$ is a prestable curve, $P$ is a principal $G$-bundle on $C$, and $u$ is a section of the associated $C(Y)$-bundle. Given this data, there is a $G$-equivariant embedding
\bcd
P\times_{G} C(Y) \ar[r,hook, "j"]\ar[d,"p" left] & P\times_{G}\mathbb A^{\Sigma_X(1)}= \bigoplus_{\rho\in\Sigma_X(1)} L_{\rho}\ar[dl]\\
C \ar[u,bend right,"u"right] &
\ecd
which expresses $P \times_G C(Y)$ as the vanishing locus of $u_Y$, viewed as a section of a line bundle on the total space of $\oplus_{\rho \in \Sigma_X(1)} L_\rho$. This shows that the two definitions of the moduli space agree.

It remains to compare the virtual classes. The obstruction theory on the GIT space is defined relative to the stack $\mathfrak{Bun}^{G}_{0,n}$ parametrising principal $G$-bundles on the universal curve
$\mathcal{C}_{\MM_{0,n}} \to \MM_{0,n}$.
It is given by
\begin{equation*} \EE_{\om{Q}/\mathfrak{Bun}^G_{0,n}}^\vee = \R \pi_* (u^*\TT_p) \end{equation*}
where $\pi$ is the universal curve over $\om{Q} = \om{Q}^{\operatorname{GIT}}_{0,n}(Y,\beta)$ and $\TT_p$ is the relative tangent complex of the projection map $\rho$. There is a natural isomorphism
\begin{equation*} \mathfrak{Bun}^{G}_{0,n} = \underbrace{\mathfrak{Pic}_{0,n} \times_{\MM_{0,n}} \ldots \times_{\MM_{0,n}} \mathfrak{Pic}_{0,n}}_{r_X} \end{equation*}
given by sending $P$ to the $r_X$ individual factors of the affine bundle $P\times_{G}\mathbb A^{r_X}$. 
Using the normal sheaf sequence for the inclusion $j$ in the diagram above (all relative to the base $C$) we obtain a short exact sequence on $C$:
\begin{equation*} 0 \to u^* \TT_p \to \bigoplus_{\rho \in \Sigma_X(1)} L_\rho \to u^* \NN_{P \times_G C(Y)/\oplus_{\rho \in \Sigma_X(1)} L_\rho} \to 0 \end{equation*}
Since $P \times_G C(Y)$ is defined by the vanishing of $u_Y$, we see that the final term is isomorphic to the line bundle $L_Y$ discussed above. Thus we have a natural isomorphism of objects of the derived category:
\begin{equation*} u^* \TT_p = \left[ \bigoplus_{\rho \in \Sigma_X(1)} L_\rho \to L_Y \right] \end{equation*}
Applying $\R \pi_*$ we obtain on the left hand side the obstruction theory for $\om{Q}^{\operatorname{GIT}}_{0,n}(Y,\beta)$ relative $\mathfrak{Bun}^G_{0,n}$. On the other hand, the first term on the right hand side is the obstruction theory for the toric quasimap space $\om{Q}_{0,n}(X,\beta)$ relative to the fibre product of the Picard stacks, 
whereas the second term is the relative obstruction theory for $\om{Q}_{0,n}(Y,\beta)$ inside $\om{Q}_{0,n}(X,\beta)$. Thus the virtual classes agree as well.

\end{remark}

\begin{aside} In Remark \ref{Example of non-injectivity} we saw that if $Y=\PP^1 \times \PP^1$ and $X=\PP^3$, with $Y \hookrightarrow X$ given by the Segre embedding, then the induced map
\begin{equation*} \mathcal{Q}^{\operatorname{GIT}}_{0,3}(Y,(2,2)) \to \mathcal{Q}^{\operatorname{GIT}}_{0,3}(X,4)\end{equation*}
is not injective. However, there is no contradiction between this and the discussion above. The somewhat subtle point is that the definition of the quasimap space depends on the presentation of the target as a GIT quotient \cite[\S 4.6]{CFKM}. In Remark \ref{Example of non-injectivity} we expressed $Y$ as a toric GIT quotient
\begin{equation*} Y \cong \Aaff^4 \sslash \Gm^2 \end{equation*}
whereas in the context of Remark \ref{Section comparison with GIT construction}, $Y$ would be expressed as a more parsimonious quotient:
\begin{equation*} Y \cong C(Y) \sslash \Gm \end{equation*}
The map $\mathcal{Q}^{\operatorname{GIT}}(\Aaff^4\sslash\Gm^2) \to \mathcal{Q}^{\operatorname{GIT}}(X)$ is not an embedding, but it factors through $\mathcal{Q}^{\operatorname{GIT}}(C(Y)\sslash\Gm) \rightarrow \mathcal{Q}^{\operatorname{GIT}}(X)$ which is.

\end{aside}

We have thus shown that the comb locus $\mathcal D^{\mathcal{Q}}(X|Y,A,B,M)$ makes sense for general $(X,Y)$. Our next task is to construct a virtual class on this locus.  Consider the product (\emph{not} the fibre product over $Y^r$)
\begin{equation*} \mathcal E^{\mathcal{Q}}(X|Y,A,B,M) := \Q{0}{A^{(0)} \cup \{q_1, \ldots, q_r\}}{Y}{\beta^{(0)}} \times \prod_{i=1}^r \Q{0}{\alpha^{(i)}\cup (m_i)}{X|Y}{\beta^{(i)}} \end{equation*}
which we may endow with the product virtual class (with weighting as before):
\begin{align*} \virt{\mathcal E^{\mathcal{Q}}(X|Y, & A,B,M)} := \\
& \left( \dfrac{m^{(1)} \cdots m^{(r)}}{r!}\right) \cdot \left( \virt{\Q{0}{A^{(0)} \cup \{q_1, \ldots, q_r\}}{Y}{\beta^{(0)}}} \times \prod_{i=1}^r \virt{\Q{0}{\alpha^{(i)}\cup (m_i)}{X|Y}{\beta^{(i)}}} \right) \end{align*}
We have the following cartesian diagram
\bcd
\mathcal D^{\mathcal{Q}}(X|Y,A,B,M)\ar[r]\ar[d]\ar[dr,phantom,"\Box"] & \mathcal E^{\mathcal{Q}}(X|Y,A,B,M)\ar[d] \\
X^r\ar[r,"\Delta_{X^r}"] & X^r\times X^r
\ecd
and we can use this to define a virtual class on the comb locus:
\[
 \virt{\mathcal D^{\mathcal{Q}}(X|Y,A,B,M)} := \Delta_{X^r}^!\virt{\mathcal E^{\mathcal{Q}}(X|Y,A,B,M)}
\]
The virtual class on the union $\mathcal D^\mathcal{Q}_{\alpha,k}(X|Y,\beta)$ of the comb loci is defined to be the sum of the virtual classes $\virt{\mathcal D^{\mathcal{Q}}(X|Y,A,B,M)}$.

\begin{remark} This is the same definition of the virtual class of the comb locus that we gave in \S \ref{Subsection recursion formula for PN} in the case $(X,Y)=(\PP^N,H)$. \end{remark}

On the other hand, there is another cartesian diagram:
\bcd
{\displaystyle \coprod_{B \colon i_* B = B^\prime} \mathcal D^\mathcal{Q}(X|Y,A,B,M)} \ar[r] \ar[d] & \mathcal D^\mathcal{Q}(\PP^N|H, A, B^\prime, M) \ar[d] \ar[ld,phantom,"\square"]  \\
\Q{0}{n}{X}{\beta} \ar[r,"k"] & \Q{0}{n}{\PP^N}{d}
\ecd
Recall that we are trying to show that the virtual class of the comb locus pulls back nicely along $k$. The result that we need is:
\begin{lem} \label{Comb loci pull back} $\displaystyle k^! \virt{\mathcal D^\mathcal{Q}(\PP^N|H,A,B^\prime,M)} = \sum_{B : i_* B = B^\prime} \virt{\mathcal D^\mathcal{Q}(X|Y,A,B,M)}$ \end{lem}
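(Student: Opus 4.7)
The plan is to lift the identity from the comb loci $\mathcal{D}^{\mathcal{Q}}$ to the total products $\mathcal{E}^{\mathcal{Q}}$, where the virtual classes decompose as external products over the individual quasimap factors, and then descend via diagonal pull-back. By definition, $\virt{\mathcal{D}^{\mathcal{Q}}(X|Y,A,B,M)} = \Delta_{X^r}^! \virt{\mathcal{E}^{\mathcal{Q}}(X|Y,A,B,M)}$, and by the analogous construction on the $(\PP^N,H)$ side (see Remark \ref{Remark on definition of comb locus}), the comb locus class there can likewise be written as a diagonal pull-back of the product class on $\mathcal{E}^{\mathcal{Q}}(\PP^N|H, A, B', M)$. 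The lemma then reduces to two steps: (i) an analogous identity at the level of $\mathcal{E}^{\mathcal{Q}}$ and (ii) the commutativity of the Gysin operations $k^!$ and $\Delta^!$.

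For step (i), the morphism $\coprod_B \mathcal{E}^{\mathcal{Q}}(X|Y,A,B,M) \to \mathcal{E}^{\mathcal{Q}}(\PP^N|H,A,B',M)$ factors as an external product of the map $k$ on each of the $r+1$ quasimap factors. Combining the compatibility of virtual pull-back with external products with the definitions of the virtual classes on $\Q{0}{\alpha^{(i)}\cup(m^{(i)})}{X|Y}{\beta^{(i)}}$ (\S\ref{Subsection basic properties of the moduli space}) and on $\Q{0}{A^{(0)}\cup\{q_i\}}{Y}{\beta^{(0)}}$ (Remark \ref{Section comparison with GIT construction}) as $k^!$-pullbacks of the corresponding classes on the $(\PP^N,H)$ side, one obtains
\begin{equation*}
k^! \virt{\mathcal{E}^{\mathcal{Q}}(\PP^N|H,A,B',M)} = \sum_B \virt{\mathcal{E}^{\mathcal{Q}}(X|Y,A,B,M)},
\end{equation*}
where the sum runs over $B$ with $i_*B = B'$. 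The splitting of the pull-back into this sum arises because the fibres of $k$ decompose according to the curve class on $X$ lying above a fixed class on $\PP^N$.

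Step (ii) invokes the commutativity of $k^!$ with the Gysin pull-back $\Delta^!$. These arise from two independent cartesian squares---the $k$-map lives between moduli of quasimaps, whereas the diagonal lives in the Cartesian product of the target varieties---and their commutativity is a standard compatibility result (\cite[Theorem 6.4]{FUL} in the classical case, and the virtual extension in \cite{Manolache-Pull}). The main technical obstacle will be to keep careful track of the various diagonals ($\Delta_{X^r}$ for $(X,Y)$ versus the corresponding diagonal for $(\PP^N,H)$) and verify their compatibility under the embedding $i\colon X \hookrightarrow \PP^N$; since the relevant evaluations factor through $Y \subset X$ (respectively $H \subset \PP^N$) by virtue of the contact conditions at the comb nodes, the underlying fibre products agree on both sides, but the pulled-back classes must be matched up via an excess intersection argument against the ambient smooth targets.
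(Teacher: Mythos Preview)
Your high-level strategy --- lift to the total products $\mathcal{E}^{\mathcal{Q}}$ and then descend via diagonal pull-back --- matches the paper's, but the execution has a genuine gap in step (ii), and the paper's proof introduces substantive machinery that your outline omits.

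The crux is this: the pull-back $k^!$ in the statement is defined via the single map $k\colon \om{Q}(X)\to \om{Q}(\PP^N)$, applied through the gluing morphism $\mathcal{D}^{\mathcal{Q}}(\PP^N|H,A,B',M)\to \om{Q}(\PP^N)$. By contrast, the pull-back you use in step~(i) is a \emph{product} of factor-wise maps on $\mathcal{E}^{\mathcal{Q}}$. These are genuinely different operations, and the total product $\mathcal{E}^{\mathcal{Q}}(\PP^N|H,A,B',M)$ does not map to $\om{Q}(\PP^N)$, so there is no cartesian square over which to commute $k^!$ with $\Delta_{(\PP^N)^r}^!$ directly. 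Your appeal to ``standard compatibility'' therefore does not apply. Relatedly, the diagonal on the $(\PP^N,H)$ side is $\Delta_{(\PP^N)^r}$ while on the $(X,Y)$ side it is $\Delta_{X^r}$; you flag this mismatch but then defer to an unspecified ``excess intersection argument,'' which is precisely where the difficulty lies.

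The paper resolves both issues by interpolating through the \emph{absolute} centipede loci $\mathcal{D}^{\mathcal{Q}}(X,A,B)$ and $\mathcal{E}^{\mathcal{Q}}(X,A,B)$ (quasimaps to $X$ and $\PP^N$, with no $Y$ or $H$), together with an auxiliary map $\theta\colon \mathcal{E}(\PP^N|H)\to\mathcal{E}(\PP^N)$. Two ingredients then do the work: first, the splitting axiom for quasimaps, realised via a gluing map $\psi$ on weighted prestable curves, gives $\virt{\mathcal{D}(X)}=\psi^!\virt{\om{Q}(X)}$ and hence $\virt{\mathcal{D}(X)}=k^![\mathcal{D}(\PP^N)]$ by commuting $\psi^!$ with $k^!$; second, the identity $\virt{\mathcal{E}(X|Y)}=\theta^!\virt{\mathcal{E}(X)}$ lets one pass between relative and absolute products. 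The final chain of equalities commutes $\theta^!$, $k^!$, and the two diagonals $\Delta_{X^r}^!$, $\Delta_{(\PP^N)^r}^!$ through these intermediaries, never needing to compare the diagonals head-on. Your proposal is missing exactly this bridge; without the centipede loci and the splitting axiom, step~(ii) cannot be completed as stated.
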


For the proof of Lemma~\ref{Comb loci pull back}, let us introduce the following shorthand notation.  We fix the data of $A$, $B^\prime$, $M$ for a comb locus of $(\PP^N,H)$, and set:
\begin{align*}
\mathcal{D}(X|Y) & := \textstyle \coprod_{B \colon i_* B = B^\prime} \mathcal D^{\mathcal{Q}}(X|Y,A,B,M) &&& \mathcal{D}(\PP^N|H) & := \mathcal D^{\mathcal{Q}}(\PP^N|H,A,B^\prime,M)\\
\mathcal{E}(X|Y) & := \textstyle \coprod_{B \colon i_* B = B^\prime} \mathcal E^{\mathcal{Q}}(X|Y,A,B,M) &&& \mathcal{E}(\PP^N|H) & := \mathcal E^{\mathcal{Q}}(\PP^N|H,A,B^\prime,M)\\
\mathcal{D}(X) & := \textstyle \coprod_{B \colon i_* B = B^\prime} \mathcal{D}^{\mathcal{Q}}(X,A,B)  &&& \mathcal{D}(\PP^N) & := \mathcal D^{\mathcal{Q}}(\PP^N,A,B^\prime)\\
\mathcal{E}(X) & := \textstyle \coprod_{B \colon i_* B = B^\prime} \mathcal{E}^{\mathcal{Q}}(X,A,B) &&& \mathcal{E}(\PP^N) & := \mathcal E^{\mathcal{Q}}(\PP^N,A,B^\prime)\\
\om{Q}(X) & :=\Q{0}{n}{X}{\beta} &&& \om{Q}(\PP^N) & :=\Q{0}{n}{\PP^N}{i_* \beta} 
\end{align*}
Here $\mathcal{D}(X)$ and $\mathcal{E}(X)$ are the so-called centipede loci; they are defined in the same way as the comb loci, except that we replace both the quasimaps to $Y$ and the relative quasimaps to $(X,Y)$ by quasimaps to $X$. There is a cartesian diagram:
\bcd
\mathcal{E}(X|Y) \ar[d]\ar[r]\ar[dr,phantom,"\Box"] & \mathcal{E}(\PP^N|H)\ar[d,"\theta"] \\
\mathcal{E}(X)\ar[r] & \mathcal{E}(\PP^N)
\ecd
Since $\mathcal{E}(\PP^N)$ is smooth (being a product of spaces of quasimaps to $\PP^N$) and there is a natural fundamental class on $\mathcal{E}(\PP^N|H)$, we have a diagonal pull-back morphism $\theta^! = \theta_{\Delta}^!$ (see Appendix~\ref{appendix:intersection}). It follows immediately from the definitions that:
\begin{lem}\label{theta-pull} $\virt{\mathcal{E}(X|Y)}=\theta^!\virt{\mathcal{E}(X)}$ .\end{lem}

Now consider the following cartesian diagram
\bcd
\mathcal D(X)\ar[r]\ar[d,"\varphi_{{}_X}"]\ar[dr,phantom,"\Box"] & \mathcal D(\PP^N)\ar[r]\ar[d,"\varphi_{\PP^N}"]\ar[dr,phantom,"\Box"] & \MM_{A,B}^{\operatorname{wt}}\ar[d,"\psi"] \\
\om{Q}(X)\ar[r,"k"] & \om{Q}(\PP^N)\ar[r] & \MM_{0,n,\beta}^{\operatorname{wt}}
\ecd
where $\MM_{0,n,\beta}^{\operatorname{wt}}$ is the moduli space of prestable curves weighted by the class~$\beta$ \cite[\S 2]{Costello} and:
\begin{equation*} \MM_{A,B}^{\operatorname{wt}} := \MM_{0,A^{(0)} \cup \{ q_1^0, \ldots, q_r^0 \}, \beta^{(0)}}^{\operatorname{wt}} \times \prod_{i=1}^r \MM_{0,A^{(i)} \cup \{ q_i^1 \},\beta^{(i)}}^{\operatorname{wt}} \end{equation*}
The vertical maps in the above diagram are given by gluing together curves (in the case of $\psi$) and quasimaps (in the case of $\varphi_X$ and $\varphi_{\PP^N}$). The maps $\mathcal{D}(X) \to \MM_{A,B}^{\operatorname{wt}}$ and $\om{Q}(X) \to \MM_{0,n,\beta}^{\operatorname{wt}}$ admit relative perfect obstruction theories which are the same as the usual perfect obstruction theories relative to the moduli spaces of \emph{unweighted} curves. Furthermore the morphism $\psi$ admits a perfect obstruction theory. Thus there are virtual pull-back morphisms $\psi^!$, and by the splitting axiom (which is the same in quasimap theory as in Gromov--Witten theory; see \cite[\S 2.3.3]{CF-K-higher-genus}) we have:
\begin{equation*} \virt{\mathcal{D}(X)} := \Delta_{X^r}^! \virt{\mathcal{E}(X)} = \psi^! \virt{\om{Q}(X)} \end{equation*}
Commutativity of virtual pull-backs then implies that:
\begin{equation} \virt{\mathcal{D}(X)} = \label{psishriek formula} \psi^!\virt{\om{Q}(X)}= \psi^! k^! [ \om{Q}(\PP^N)] = k^!\psi^![\om{Q}(\PP^N)] = k^! [ \mathcal{D}(\PP^N) ]\end{equation}

\begin{proof}[Proof of Lemma \ref{Comb loci pull back}] Putting all the preceding results together, we consider the cartesian diagram:
\bcd
\mathcal D(X|Y)\ar[d]\ar[r]\ar[dr,phantom,"\Box"] & \mathcal E(X|Y)\ar[d]\ar[r]\ar[dr,phantom,"\Box"] & \mathcal E(\PP^N|H)\ar[d,"\theta"] \\
\mathcal D(X)\ar[d]\ar[r]\ar[dr,phantom,"\Box"] & \mathcal E(X)\ar[d]\ar[r] & \mathcal E(\PP^N) \\
X^r\ar[r,"\Delta_{X^r}"] & X^r\times X^r & {}
\ecd
We then have:
\begin{align*} \virt{\mathcal D(X|Y)} & = \Delta_{X^r}^! \virt{\mathcal E(X|Y)} & \text{by definition}\\
& =  \Delta_{X^r}^! \theta^!\virt{\mathcal E(X)} & \text{by Lemma \ref{theta-pull}} \\
& =  \theta^!\Delta_{X^r}^! \virt{\mathcal E(X)} & \text{by commutativity} \\
& =  \theta^! \virt{\mathcal{D}(X)} & \text{by definition} \\
& =  \theta^!k^! [\mathcal{D}(\PP^N)] & \text{by formula \eqref{psishriek formula} above} \\
& =  \theta^!k^!\Delta_{(\PP^N)^r}^! [\mathcal E(\PP^N)] & \text{by definition} \\
& =  k^!\Delta_{(\PP^N)^r}^!\theta^! [\mathcal E(\PP^N)] & \text{by commutativity} \\
& = k^! \Delta_{(\PP^N)^r}^! [\mathcal{E}(\PP^N|H)] & \text{by Lemma \ref{theta-pull}} \\
& =  k^![\mathcal{D}(\PP^N|H)] & \text{by definition}
\end{align*}
Summing over all the components of $\mathcal{D}^{\mathcal{Q}}_{\alpha,k}(\PP^N|H,d)$ we obtain the result. \end{proof}

\begin{proof}[Proof of Theorem~\ref{Theorem general recursion}]
Apply $k^!$ to Proposition~\ref{Recursion formula for PN}, using  Lemma~\ref{Comb loci pull back}. 
\end{proof}

\section{Quasimap quantum Lefschetz theorem} \label{Section quasimap mirror theorem}
The recursion formula shows that the relative quasimap invariants of $(X,Y)$ are completely determined, in an algorithmic way, from the absolute invariants of $X$ and $Y$; by repeatedly applying the recursion formula, we can remove all the tangency conditions, leaving us with an expression which only involves the invariants of $X$ and $Y$.

However, we can do much more than this. In this section we will prove (two variations of) a \emph{quantum Lefschetz theorem for quasimap invariants}, that is, a result which expresses the quasimap invariants of $Y$ in terms of those of $X$. This is the quasimap analogue of the quantum Lefschetz hyperplane principle in Gromov--Witten theory and, on the face of it, has nothing to do with relative invariants.

\subsection{General quasimap quantum Lefschetz}\label{Subsection general quasimap Lefschetz}
First we state the most general form of the theorem, without any additional assumptions on $X$ and $Y$.
\begin{thm}[Quasimap quantum Lefschetz theorem] \label{Theorem full quasimap Lefschetz} Let $X$ be a smooth projective toric variety and $Y \subseteq X$ a smooth very ample hypersurface. Then there is an explicit algorithm to recover the (restricted) absolute quasimap invariants of $Y$, as well as the relative quasimap invariants of $(X,Y)$, from the absolute quasimap invariants of $X$.\end{thm}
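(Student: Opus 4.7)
The plan is to promote the recursion formula of Theorem~\ref{Theorem general recursion} to an effective algorithm by reading it backwards. Rearranging gives
\begin{equation*}
\virt{\Q{0}{\alpha+e_k}{X|Y}{\beta}} = (\alpha_k \psi_k + \ev_k^*[Y]) \cap \virt{\Q{0}{\alpha}{X|Y}{\beta}} - \virt{\mathcal{D}^\mathcal{Q}_{\alpha,k}(X|Y,\beta)},
\end{equation*}
so any relative invariant with tangency $\alpha+e_k$ is determined by (a) relative invariants with total tangency $|\alpha|=|\alpha+e_k|-1$, possibly carrying one additional $\psi_k$ or $\ev_k^*[Y]$ insertion, and (b) integrals against the comb locus. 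Iterating this descending recursion one unit at a time terminates at $\alpha=(0,\ldots,0)$, where $\Q{0}{\alpha}{X|Y}{\beta}=\Q{0}{n}{X}{\beta}$ carries its usual virtual class and the resulting integrals are absolute quasimap invariants of $X$.

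The comb locus contributions must also be reduced. By the splitting axiom for quasimaps \cite[\S 2.3.3]{CF-K-higher-genus}, together with the construction of $\virt{\mathcal{D}^\mathcal{Q}(X|Y,A,B,M)}$ as a diagonal pull-back from the product space $\mathcal{E}^\mathcal{Q}(X|Y,A,B,M)$, each such integral factors as a product of integrals on the stem factor $\Q{0}{A^{(0)}\cup\{q_1,\ldots,q_r\}}{Y}{\beta^{(0)}}$ and on the tooth factors $\Q{0}{\alpha^{(i)}\cup(m^{(i)})}{X|Y}{\beta^{(i)}}$, linked by insertions coming from the class of $\Delta_{Y^r}$. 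The tooth factors have strictly smaller curve classes $\beta^{(i)}<\beta$ and so are handled inductively by the same recursion; the stem factor contributes only absolute quasimap invariants of $Y$ in degree $\beta^{(0)}\leq\beta$.

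To close the loop, and in particular to extract the (restricted) quasimap invariants of $Y$ themselves, I would exploit the maximum-tangency identification. When $|\alpha|=Y\cdot\beta$ and every $\alpha_i\geq 1$, the combinatorial description of $\Q{0}{\alpha}{X|Y}{\beta}$ forces the section $u_Y$ to vanish on every component of the source curve; equivalently, the underlying quasimap factors through $Y$. The natural morphism $\Q{0}{n}{Y}{\beta}\to \Q{0}{\alpha}{X|Y}{\beta}$ is then generically an isomorphism (with the expected combinatorial multiplicity $\prod_i \alpha_i$), and the comparison of virtual classes follows from the GIT description in Remark~\ref{Section comparison with GIT construction}. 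This identifies each absolute quasimap invariant of $Y$ in degree $\beta$ (with $Y\cdot\beta$ marked points of tangency) with a maximum-tangency relative invariant of $(X,Y)$, to which the descending recursion can be applied. A joint induction, on $\beta$ for the $Y$-invariants and on $|\alpha|$ (downwards from the maximum) for the relative invariants, then expresses everything in terms of absolute quasimap invariants of $X$.

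The main obstacle is the precise maximum-tangency comparison between $\virt{\Q{0}{\alpha}{X|Y}{\beta}}$ and $i_* \virt{\Q{0}{n}{Y}{\beta}}$, including its combinatorial multiplicity. This is the quasimap analogue of \cite[Lem.~2.4]{Ga}, and has to be established at the level of the GIT presentation, where the obstruction-theoretic comparison already carried out in Remark~\ref{Section comparison with GIT construction} reduces it essentially to a normal-bundle calculation along the zero section of $L_Y$. Once this is settled, the rest of the algorithm is a formal unwinding of the recursion via the splitting axiom, paralleling Gathmann's procedure \cite[Cor.~5.7]{Ga} but substantially simplified by the absence of rational tails in the quasimap comb loci.
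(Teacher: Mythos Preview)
Your proposal has a genuine gap at the step where you extract the absolute invariants of $Y$. The claim that ``when $|\alpha|=Y\cdot\beta$ and every $\alpha_i\geq 1$, the combinatorial description forces $u_Y$ to vanish on every component'' is false. On the nice locus of $\Q{0}{\alpha}{\PP^N|H}{d}$ with $|\alpha|=d$, the generic point has irreducible source $\PP^1$ and $u_0\neq 0$ with divisor of zeros exactly $\sum_i\alpha_i x_i$; this is perfectly compatible with $\deg L_Y=d=|\alpha|$. A dimension count confirms the mismatch: $\Q{0}{\alpha}{\PP^N|H}{d}$ has dimension one greater than $\Q{0}{n}{H}{d}$, so no identification of the kind you propose (even up to a multiplicity $\prod_i\alpha_i$) can hold at the level of virtual classes. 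Since your induction relies on this identification to feed the absolute $Y$-invariants back into the comb-locus stem factors, the algorithm does not close.

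The paper obtains the $Y$-invariants by a different mechanism: it applies the recursion once \emph{beyond} the maximum, at $\alpha=(d,0,\ldots,0)$ with $d=Y\cdot\beta$. Then $|\alpha+e_1|>Y\cdot\beta$, so $\virt{\Q{0}{\alpha+e_1}{X|Y}{\beta}}=0$, and on the right-hand side only the comb locus survives. Its $r=0$ component (``comb with no teeth'') is precisely $\Q{0}{n}{Y}{\beta}$, because the condition $Y\cdot\beta^{(0)}+\sum_i m^{(i)}=\sum\alpha^{(0)}$ becomes $d=d$. This isolates the desired absolute $Y$-invariant as one term in an identity whose remaining terms are already known inductively. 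The induction is lexicographic on $(Y\cdot\beta,\,n,\,|\alpha|)$, with absolute $Y$-invariants formally placed at $|\alpha|=d+1$; note in particular that your assertion ``tooth factors have strictly smaller $\beta^{(i)}<\beta$'' is not always true (one may have $\beta^{(0)}=0$, $r=1$, $\beta^{(1)}=\beta$), but then the tooth carries fewer markings, which is why $n$ must enter the ordering. Finally, you do not address the non-restricted diagonal insertions arising from $[\Delta_{Y^r}]$ in the splitting; the paper handles this separately by arguing that any resulting invariant with exactly one non-restricted insertion vanishes.
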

The corresponding result in Gromov--Witten theory is due to Gathmann \cite[Corollary 2.5.6]{GathmannThesis}; the proof we present in the quasimap setting is very similar to his. The term \emph{``restricted''} here means that we only integrate against cohomology classes pulled back from $\HH^*(X)$, rather than allowing arbitrary classes from $\HH^*(Y)$.

\begin{proof} The idea, of course, is to repeatedly apply the recursion formula. The proof is by induction, and in order for the argument to work it is essential that we determine simultaneously  the absolute invariants of $Y$ and the relative invariants of $(X,Y)$.

We induct on: the intersection number $d=Y\cdot\beta$, the number of marked points $n$, and the total tangency $\Sigma_i \alpha_i$, in that order. This means that when we come to compute an absolute or relative invariant, we assume that all of the absolute \emph{and} relative invariants with
\begin{enumerate}
\item[(i)] smaller $d$, or
\item[(ii)] the same $d$, but smaller $n$, or
\item[(iii)] the same $d$, the same $n$, but smaller $\Sigma_i \alpha_i$
\end{enumerate}
are known. For the purposes of this ordering, we set $\Sigma_i \alpha_i = d+1$ for any absolute invariant of $Y$. This means that when we come to compute such an invariant, we assume that all the relative invariants with the same $d$ and $n$ are known.

We first prove the induction step for the relative invariants; suppose then that we want to compute some invariant:
\begin{equation*} \langle \gamma_1 \psi_1^{k_1}, \ldots, \gamma_n \psi_n^{k_n} \rangle_{0,\alpha,\beta}^{X|Y}  \end{equation*}
We assume $\Sigma_i \alpha_i > 0$, since otherwise this is just an absolute invariant of $X$. Pick some $k \in \{1,\ldots,n\}$ with $\alpha_k > 0$, and apply Theorem \ref{Theorem general recursion} to obtain:
\begin{equation*} ((\alpha_k-1) \psi_k + \ev_k^* [Y]) \cap \virt{\Q{0}{\alpha-e_k}{X|Y}{\beta}} = \virt{\Q{0}{\alpha}{X|Y}{\beta}} + \virt{\mathcal D^\mathcal{Q}_{\alpha-e_k,k}(X|Y,\beta)} \end{equation*}
Capping this with the appropriate product of evaluation and psi classes, we obtain from the first term on the right-hand side the invariant that we are looking for.

It remains to show that the other terms are known by the induction hypothesis. Clearly, this is true for the term on the left-hand side, which has the same $d$, the same $n$, but smaller $\Sigma_i \alpha_i$. Consider on the other hand a component of the comb locus. This contributes a product of an absolute invariant of $Y$ (corresponding to the internal component) with a number of relative invariants of $(X,Y)$ (corresponding to the external components). One can check that each of these invariants either has smaller $d$, or the same $d$ and smaller $n$. Thus, they are also determined. Therefore the relative invariant is determined inductively.

Now we prove the induction step for the absolute invariants of $Y$. Suppose then that we want to compute a restricted invariant:
\begin{equation*} \langle \gamma_1 \psi_1^{k_1}, \ldots, \gamma_n \psi_n^{k_n} \rangle_{0,n,\beta}^{Y} \end{equation*}
 If we apply Theorem \ref{Theorem general recursion} with $\alpha=(d+1,0,\ldots,0)$ we obtain
\begin{equation*} (d \psi_1 + \ev_1^* [Y]) \cap \virt{\Q{0}{\alpha-e_1}{X|Y}{\beta}} = \virt{\mathcal D^\mathcal{Q}_{\alpha,1}(X|Y,\beta)} \end{equation*}
where the comb locus on the right-hand side has a connected component isomorphic to the moduli space
\begin{equation*} \Q{0}{n}{Y}{\beta} \end{equation*}
(corresponding to a ``comb with no teeth''). Capping as before with an appropriate class, we obtain the invariant that we are looking for. The term on the left-hand side is known since $\Sigma_i \alpha_i$ is smaller, while any other terms coming from the comb locus either involve invariants with smaller $d$ or with the same $d$ but smaller $n$, and so are also known inductively. This completes the proof. \end{proof}

\begin{remark} There is a subtle but extremely important point which we have ignored in the proof above. While the statement of Theorem \ref{Theorem full quasimap Lefschetz} only concerns the \emph{restricted} quasimap invariants, i.e. those with insertions from $\HH^*(X)$, when we calculate contributions from the comb loci we are forced to consider unrestricted invariants, due to classes in the diagonal in $\HH^*(Y\times Y)$ which do not come from $\HH^*(X\times X)$. This is problematic, since in general these terms cannot be computed inductively.

However, a careful analysis of the recursion formula shows that any term which appears in this way must in fact be zero. The argument is the same as the one given for Gromov--Witten invariants in \cite[\S 2.5]{GathmannThesis}; the details are left to the reader. The key idea is to show that any absolute or relative quasimap invariant which has precisely one insertion from outside of $\HH^*(X)$ must be zero, and then to show that any term arising from the comb locus and involving unrestricted classes is equal to a product of invariants, at least one of which takes this form.\end{remark}

\subsection{A mirror theorem for quasimap invariants}
Although the algorithm presented in the previous section is completely explicit, it is in general quite involved, since the combinatorics can become arbitrarily complicated. We would like to be able to find a closed formula which expresses the quasimap invariants of $Y$ in terms of those of $X$. This is our goal over the next few sections, culminating in Theorem \ref{Theorem Quantum Lefschetz}, which provides such a closed formula, under some additional restrictions.

In \cite{Ga-MF} Gathmann applies the stable map recursion formula to obtain a new proof of the mirror theorem for hypersurfaces \cite{Givental-equivariantGW}. This can be viewed as a partial quantum Lefschetz formula, expressing certain stable map invariants of $Y$ in terms of those of $X$.

In this section we carry out a similar computation in the quasimap setting. We work with generating functions for $2$-pointed quasimap invariants (the minimal number of markings, due to the strong stability condition). The absence of rational tails in the quasimap moduli space makes the quasimap recursion much simpler than Gathmann's.

Our formula can be viewed as a special case of \cite[Corollary 5.5.1]{CF-K-wallcrossing}, and thus as a relation between certain residues of the $\Gm$-action on spaces of $0$-pointed and $1$-pointed parametrised quasimaps to $Y$. Some of the consequences of this formula are explored in \cite[Section 5.5]{CF-K-wallcrossing}; for instance, it follows in the semipositive case that all primary $\epsilon$-quasimap invariants with a fundamental class insertion can be expressed in terms of $2$-pointed invariants.

\subsection{Setup} \label{Subsection setup}
As before, we let $X=X_{\Sigma}$ be a smooth projective toric variety and $i \colon Y \hookrightarrow X$ a smooth very ample hypersurface. We also make the following two assumptions:
\begin{enumerate}
\item[(i)] \emph{$Y$ is semi-positive}: $-K_Y$ is nef;
\item[(ii)] \emph{$Y$ contains all curve classes}: the map $i_* : \Achow_1(Y) \to \Achow_1(X)$ is surjective.
\end{enumerate}
By adjunction, $-K_X$ pairs strictly positively with every curve class coming from $Y$, hence with every curve class by Assumption (2). Thus $-K_X$ is ample by Kleiman's criterion (since the effective cone of a toric variety is finitely generated), so $X$ is Fano. Also note that if $\dim X \geq 3$ then Assumption (2) always holds, due to the classical Lefschetz hyperplane theorem; on the other hand if $\dim X = 2$ then Assumption (2) forces $X$ to be $\PP^2$.

We fix a homogeneous basis $\eta_0, \ldots, \eta_k$ for $\HH^*(X) = \HH^*(X,\QQ)$ and let $\eta^0, \ldots, \eta^k$ denote the dual basis with respect to the Poincar\'e pairing. Without loss of generality we may suppose that $\eta^0=\mathbbm 1_X$ and $\eta^1=[Y]$. We get an induced basis $\rho_1=i^*\eta_1, \ldots, \rho_k = i^* \eta_k$ for $i^*\HH^*(X)$. Notice that $\rho_0 = i^* \eta_0 = i^* [\pt_X] = 0$, $\rho_1 = i^* \eta_1 = [\pt_Y]$.  We can extend the $\rho_i$ to a basis $\rho_1, \ldots, \rho_l$ for $\HH^*(Y)$ by adding $\rho_{k+1}\ldots,\rho_{l}$. Let $\rho^1, \ldots, \rho^l$ denote the dual basis; notice that $\rho^i$ is \emph{not} equal to $i^* \eta^i$ (they do not even have the same degree!).  Note also that $\rho^1 = \mathbbm{1}_Y$.

\subsection{Generating functions for quasimap invariants}
As with many results in enumerative geometry, the quasimap Lefschetz formula is most conveniently stated in terms of generating functions. Here we define several such generating functions for the absolute quasimap invariants of $X$ and $Y$.  We work with two marked points since this is the minimum number required in order for the quasimap space to be nonempty. However since we only take insertions at the first marking we would like to think of these, morally speaking, as $1$-pointed invariants (in Gromov--Witten theory the corresponding statement is literally true, due to the string equation).

For any smooth projective toric variety $X$ (or more generally, any space for which the quasimap invariants are defined), and any effective curve class $\beta\in \HH_2^+(X)$, we define
\begin{align*} \mathscr{S}^X_\beta(z) & =(\ev_1)_*\left(\frac{1}{z-\psi_1} \virt{\Q{0}{2}{X}{\beta}}\right) 
\intertext{and take}
S_0^X(z,q) &=\sum_{\beta\geq 0} q^\beta \mathscr{S}^X_\beta(z)\end{align*}
where $q$ is a Novikov variable and $\mathscr{S}_0^X(z) = \mathbbm 1_X$ by convention. These are generating functions for quasimap invariants of $X$ which take values in $\HH^*(X)$. We remark that each $q$-coefficient $\mathscr{S}_\beta^X(z)$ is a polynomial in $z$. 

\begin{remark} We use the notation $S_0^X(z,q)$ because this is the $\mathbf{t}=0$ restriction of the $S^{\small{0+}}$-operator applied to the fundamental class (see \S \ref{Subsection CFK comparison} below).\end{remark}

The same definition applies to $Y$. However, as noted in \S \ref{Subsection general quasimap Lefschetz}, quantum Lefschetz theorems only work to study \emph{restricted quasimap invariants}. The generating function for these is defined as
\begin{equation*} \tilde{\mathscr{S}}^Y_\beta(z) = (\ev_1)_* \left( \dfrac{1}{z-\psi_1} \virt{\Q{0}{2}{Y}{\beta}} \right) \end{equation*}
where crucially $\ev_1$ is viewed as \emph{mapping to $X$} instead of to $Y$. Thus $\tilde{\mathscr{S}}^Y_\beta(z)$ takes values in $\HH^*(X)$ and involves only quasimap invariants of $Y$ with insertions coming from $i^*\HH^*(X)$; this is in contrast to $\mathscr{S}^Y_\beta(z)$, which takes values in $\HH^*(Y)$ and involves quasimap invariants of $Y$ with arbitrary insertions. As earlier, we can also define $\tilde{S}_0^Y(z,q)$.

Now, since $X$ and $Y$ are smooth, we may use Poincar\'{e} duality to define a push-forward map on cohomology, $i_* \colon \HH^k(Y) \to \HH^{k+2}(X)$.

\begin{lem} \label{lemma pushforward} $i_* \mathscr{S}^Y_\beta(z) = \tilde{\mathscr{S}}^Y_\beta(z)$. \end{lem}
\begin{proof} This follows from functoriality of cohomological push-forwards and the fact that we have a commuting triangle:
\bcd
\Q{0}{2}{Y}{\beta} \ar[rr,"\ev_1"] \ar[rd,"\ev_1" left=0.2cm] & & Y \ar[ld,"i"] \\
& X & 
\ecd
Let us spell this out explicitly, in order to familiarise the reader with the generating functions involved. First, it is easy to see from the projection formula that:
\begin{align*} i_* \rho^i =
\begin{cases} \eta^i \qquad \text{for $i = 1, \ldots, k$} \\
0 \qquad \text{\ for $i = k+1, \ldots, l$} \end{cases} \end{align*}
Now, we can write $\mathscr{S}^Y_\beta(z)$ as:
\begin{equation*} \mathscr{S}^Y_\beta(z) = \sum_{i=1}^l \left\langle \dfrac{\rho_i}{z-\psi_1} , \mathbbm{1}_Y \right\rangle_{0,2,\beta}^Y  \rho^i \end{equation*}
Thus applying $i_*$ gives
\begin{align*} i_* \mathscr{S}^Y_\beta(z)  = \sum_{i=1}^l \left\langle \dfrac{\rho_i}{z-\psi_1} , \mathbbm{1}_Y \right\rangle_{0,2,\beta}^Y   i_* \rho^i
= \sum_{i=1}^k \left\langle \dfrac{\eta_i}{z-\psi_1}, \mathbbm{1}_X \right\rangle_{0,2,\beta}^Y   \eta^i = \tilde{\mathscr{S}}^Y_\beta(z) \end{align*}
as claimed. \end{proof}

\subsection{Quasimap quantum Lefschetz formula} \label{subsection quasimap quantum Lefschetz formula} We now turn to our main result: a formula expressing the generating function $\tilde{S}_0^Y(z,q)$ for restricted quasimap invariants of $Y$ in terms of the quasimap invariants of $X$.

\begin{thm} \label{Theorem Quantum Lefschetz}
Let $X$ and $Y$ be as above. Then
\begin{equation}\label{eqn:mirror}
\tilde{S}_0^Y(z,q) = \dfrac{\sum_{\beta\geq 0} q^\beta \left(\prod_{j=0}^{Y\cdot\beta}(Y+jz)\right) \cdot \mathscr{S}^X_\beta(z)}{P_0^X(q)}
\end{equation}
where:
\begin{align*}
 P_0^X(q) 
            & = 1 + \sum_{\substack{\beta>0 \\ K_Y\cdot\beta=0}} q^\beta(Y\cdot\beta)!\langle [\pt_X] \psi_1^{Y\cdot\beta-1} ,\mathbbm 1_{X}\rangle_{0,2,\beta}^X
\end{align*}
Notice that $P_0^X(q)$ depends not only on $X$ but also on the divisor class of $Y$ in~$X$; the superscript is supposed to indicate that the definition only involves quasimap invariants of $X$.
\end{thm}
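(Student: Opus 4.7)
The plan is to iterate Theorem \ref{Theorem general recursion} in a single marked point, telescoping the resulting sequence of identities, and then to organize the comb-locus contributions into the claimed generating function identity. This approach should parallel Gathmann's derivation of the stable-map mirror formula in \cite{Ga-MF}, but should be substantially simpler here because the strong quasimap stability forbids rational-tail teeth.

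First I would fix an effective class $\beta$, set $d = Y\cdot\beta$, and apply the recursion with $k=1$ and $\alpha = (j,0)$ for each $j = 0, 1, \ldots, d$. Using $\virt{\Q{0}{(d+1,0)}{X|Y}{\beta}} = 0$, the $d+1$ resulting identities telescope (after pushing the comb loci forward via their gluing morphisms) into
\begin{equation*}
\prod_{j=0}^{d}\bigl(j\psi_1 + \ev_1^*[Y]\bigr) \cap \virt{\Q{0}{2}{X}{\beta}} = \sum_{j=0}^{d}\prod_{i=j+1}^{d}\bigl(i\psi_1 + \ev_1^*[Y]\bigr) \cap \virt{\mathcal{D}^{\mathcal{Q}}_{(j,0),1}(X|Y,\beta)}
\end{equation*}
in $\Achow_*(\Q{0}{2}{X}{\beta})$. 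I would then cap both sides with $\tfrac{1}{z-\psi_1}$, push forward via $\ev_1$, and use the congruence $\psi_1 \equiv z \pmod{z-\psi_1}$ to replace $\prod_j(Y+j\psi_1)$ by $\prod_j(Y+jz)$ modulo a polynomial in $z$. Summing over $\beta$ with Novikov parameter $q$, the left-hand side becomes the numerator of \eqref{eqn:mirror}, modulo a polynomial-in-$z$ correction.

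Next I would classify the comb-locus strata on the right-hand side. The stability condition $|A^{(i)}|+1 \geq 2$ for each tooth, combined with there being only two marked points (with $x_1$ forced onto the internal component), allows only two types of contributions: either $r=0$ (``comb with no teeth''), forced by the tangency balance $\sum\alpha^{(0)} = Y\cdot\beta^{(0)}+\sum m^{(i)}$ to occur only for $j=d$ and equal via the gluing morphism to $\Q{0}{2}{Y}{\beta}$; or $r=1$, with $x_2$ on the unique tooth, splitting $\beta = \beta^{(0)} + \beta^{(1)}$ with $j = Y\cdot\beta^{(0)} + m^{(1)}$. By Lemma \ref{lemma pushforward} the $r=0$ term contributes precisely $\tilde{S}^Y_0(z,\beta)$ to the pushforward. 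For the $r=1$ terms, a dimension count using the semi-positivity of $-K_Y$ and the Fano property of $X$ should force $K_Y \cdot \beta^{(1)} = 0$, pin down the tooth to have maximal tangency $m^{(1)} = Y\cdot\beta^{(1)}$, and identify the surviving tooth scalar (obtained by integrating over the relative quasimap tooth using the diagonal pullback to $Y$) with $(Y\cdot\beta^{(1)})!\,\langle[\pt_X]\psi_1^{Y\cdot\beta^{(1)}-1},\mathbbm{1}_X\rangle^X_{0,2,\beta^{(1)}}$, precisely one summand of $P^X_0(q)$.

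Summing over $\beta$ then reveals a convolution: the $r=1$ contributions assemble into $\tilde{S}^Y_0(z,q) \cdot (P^X_0(q) - 1)$, so that the right-hand side totals $\tilde{S}^Y_0(z,q) \cdot P^X_0(q)$. Rearrangement yields the claimed identity modulo polynomial-in-$z$ terms on both sides; these cancel because the equality must hold as a formal power series in $1/z$. The hardest step will be the dimension-counting argument that simultaneously forces $K_Y \cdot \beta^{(1)} = 0$, forces the tooth tangency to be maximal, and identifies the resulting tooth scalar with the specific $X$-invariant appearing in $P^X_0(q)$; once that vanishing and identification are in hand, the rest is routine combinatorial bookkeeping.
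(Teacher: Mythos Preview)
Your overall plan is the same as the paper's, and the classification of comb strata into $r=0$ and $r=1$ with the dimension count forcing $K_Y\cdot\beta^{(1)}=0$ and $m^{(1)}=Y\cdot\beta^{(1)}$ is exactly right. However, there is a genuine gap in your treatment of the ``polynomial-in-$z$'' remainder.

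These terms do \emph{not} cancel. Your $r=1$ comb strata only exist for $\beta^{(0)}>0$ (the handle $\Q{0}{2}{Y}{0}$ is unstable), so after summing over $\beta$ the comb contributions assemble into $(\tilde{S}_0^Y(z,q)-[Y])\cdot P_0^X(q)$, not $\tilde{S}_0^Y(z,q)\cdot P_0^X(q)$. The missing piece $[Y]\cdot(P_0^X(q)-1)$ is constant in $z$ and is supplied precisely by the polynomial correction you discarded. Concretely, the paper does not telescope at the Chow level and then cap with $\tfrac{1}{z-\psi_1}$; instead it packages the step-$m$ identity as $(Y+mz)\,\widetilde{S}_{0,(m)}^{X|Y}=\widetilde{S}_{0,(m+1)}^{X|Y}+\widetilde{T}_{0,(m)}^{X|Y}$, where $\widetilde{T}_{0,(m)}^{X|Y}$ contains, besides the comb loci, the extra constant term $m\,(\ev_1)_*\virt{\Q{0}{(m,0)}{X|Y}{\beta}}$ arising from $mz=m\psi_1+m(z-\psi_1)$. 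A separate dimension count (using $\vdim\Q{0}{(m,0)}{X|Y}{\beta}\ge\dim X-1$) shows this term vanishes except at $m=Y\cdot\beta$ with $K_Y\cdot\beta=0$, where it contributes $(Y\cdot\beta)\langle[\pt_Y],\mathbbm{1}_X\rangle^{X|Y}_{0,(Y\cdot\beta,0),\beta}\,[Y]$, exactly the $\beta^{(0)}=0$ term of the convolution. Without this, the formula fails whenever $P_0^X(q)\neq 1$, e.g.\ for the quintic threefold.

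There is also a second, smaller omission: the tooth integral that survives is the \emph{relative} invariant $(Y\cdot\beta^{(1)})\langle[\pt_Y],\mathbbm{1}_X\rangle^{X|Y}_{0,(Y\cdot\beta^{(1)},0),\beta^{(1)}}$, not immediately the absolute invariant $(Y\cdot\beta^{(1)})!\,\langle[\pt_X]\psi_1^{Y\cdot\beta^{(1)}-1},\mathbbm{1}_X\rangle^X$. The conversion requires a further application of the recursion to the tooth, together with a dimension count showing that the resulting comb loci all vanish; the claimed factorial then emerges from the product $\prod_{j=0}^{Y\cdot\beta^{(1)}-1}(Y+j\psi_1)$ after using $Y\cdot\eta_1=[\pt_X]$ and $Y^2\cdot\eta_1=0$.
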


\begin{proof}
For $m = 0, \ldots, Y \cdot \beta$, define the following generating function for $2$-pointed relative quasimap invariants
 \[
  \tilde{\mathscr{S}}_{\beta,(m)}^{X|Y}(z)=(\ev_1)_*\left(\frac{1}{z-\psi_1}\virt{\Q{0}{(m,0)}{X|Y}{\beta}}\right)
 \]
where we view $\ev_1$ as mapping to $X$.  Note that  $\tilde{\mathscr{S}}_{\beta,(0)}^{X|Y}(z) = \mathscr{S}_\beta^X(z)$. Also define the following generating function for ``comb loci invariants''
\[
 \tilde{\mathscr{R}}_{\beta,(m)}^{X|Y}(z)=(\ev_1)_*\left(m \virt{\Q{0}{(m,0)}{X|Y}{\beta}}+\frac{1}{z-\psi_1} \virt{\mathcal{D}^{\mathcal{Q}}_{(m,0),1}(X|Y,\beta)} \right)
\]
where again we view $\ev_1$ as mapping to $X$. As in \cite[Lemma 1.2]{Ga-MF}, it follows from Theorem \ref{Theorem general recursion} that
\begin{equation}\label{Gathmann recursion via generating functions}
 (Y+mz) \tilde{\mathscr{S}}_{\beta,(m)}^{X|Y}(z) = \tilde{\mathscr{S}}_{\beta,(m+1)}^{X|Y}(z)+ \tilde{\mathscr{R}}_{\beta,(m)}^{X|Y}(z)
\end{equation}
and we can apply this repeatedly to obtain:
\begin{equation} \label{eqn:G}
\prod_{j=0}^{Y\cdot\beta}(Y+jz) \mathscr{S}^X_\beta(z) = \sum_{m=0}^{Y\cdot\beta}\prod_{j=m+1}^{Y\cdot\beta}(Y+jz)\tilde{\mathscr{R}}_{\beta,(m)}^{X|Y}(z)
\end{equation}
We now examine the right-hand side in detail. By definition, $\tilde{\mathscr{R}}_{\beta,(m)}^{X|Y}(z)$ splits into two parts: those terms coming from the relative space and those terms coming from the comb loci.

Let us first consider the contribution of the comb loci. Since there are only two marked points and the first is required to lie on the internal component of the comb, it follows from the strong stability condition that there are only two options: a comb with zero teeth or a comb with one tooth.

First consider the case of a comb with zero teeth. The moduli space is then $\Q{0}{2}{Y}{\beta}$ and we require that $Y \cdot \beta = m$. Thus this piece only contributes to $\tilde{\mathscr{R}}_{\beta,(Y\cdot\beta)}^{X|Y}(z)$, and the contribution is:
\begin{equation*} \sum_{i=1}^k \left\langle \dfrac{\rho_i}{z-\psi_1}, \mathbbm{1}_Y \right\rangle_{0,2,\beta}^Y \eta^i \end{equation*}

Next consider the case of a comb with one tooth. Let $\beta^{(0)}$ and $\beta^{(1)}$ denote the curve classes of the internal and external components, respectively, and let $m^{(1)}$ be the contact order of the external component with $Y$. The picture is as follows
\begin{center}
\begin{tikzpicture}[scale=0.7]
  \draw [fill=gray!50] (-.5,-1) -- (6,-1) -- (4,2) -- (-2.5,2) -- (-.5,-1);
  \draw [thick] (0,0) to [out=90,in=180] (1,1) to [out=0,in=180] (2.5,0) to [out=0,in=-135] (4,1);
  \draw [thick] (2.5,0) to (4,3);
  \draw [thick,dashed] (2.5,0) to (2,-1);
  \draw [thick] (2,-1) -- (1.5,-2);
  \draw [fill] (3.75,2.5) circle [radius=2pt] node[above left]{$x_2$};
  \draw [fill] (.15,.5) circle [radius=2pt] node[above left]{$x_1$};
  \draw [fill] (2.5,0.0) circle [radius=2pt] node[below right]{$m^{(1)}$};
 \end{tikzpicture}
\end{center}
and the invariants which contribute take the form
\begin{equation*} \bigg\langle \dfrac{\rho_i}{z-\psi_1},\rho^h\bigg\rangle_{0,2,\beta^{(0)}}^Y \bigg\langle \rho_h, \mathbbm 1_{X}\bigg\rangle_{0,(m^{(1)},0),\beta^{(1)}}^{X|Y}\eta^i \end{equation*}
for $i = 1, \ldots, k$ and $h = 1, \ldots, l$. By computing dimensions, we find
\begin{align*}
0\leq \codim \rho^h &= \dim Y-\codim \rho_h \\
&= \dim Y-\vdim \Q{0}{(m^{(1)},0)}{X|Y}{\beta^{(1)}} \\
&= \dim Y-(\dim X-3-K_{X}\cdot \beta^{(1)}+2-m^{(1)})\\
&= K_Y \cdot \beta^{(1)} - Y \cdot \beta^{(1)}+m^{(1)} \\
&\leq 0
\end{align*}
where the final equality follows from adjunction and the final inequality holds because $-K_Y$ is nef and $m^{(1)}\leq Y \cdot \beta_1$. This shows that the only non-trivial contributions come from curve classes $\beta^{(1)}$ such that $K_Y \cdot \beta^{(1)}=0$, and that in this case the order of tangency must be maximal, i.e. $m^{(1)}=Y \cdot \beta^{(1)}$. Furthermore we must have $\codim \rho^h = 0$ and so $\rho^h = \rho^1 = \mathbbm{1}_Y$ which implies $\rho_h = \rho_1 = [\pt_Y]$. Finally since $m^{(1)}=Y \cdot \beta^{(1)}$ we have
\begin{equation*} m = Y \cdot \beta^{(0)}+m^{(1)}=Y \cdot (\beta^{(0)} + \beta^{(1)}) = Y \cdot \beta \end{equation*}
and so again this piece only contributes to $\tilde{\mathscr{R}}_{\beta,(Y\cdot\beta)}^{X|Y}(z)$, and the contribution is:
\begin{equation*} \sum_{i=1}^k \left( \sum_{\substack{0 < \beta^{(1)} < \beta \\ K_Y \cdot \beta^{(1)}=0}} (Y \cdot \beta^{(1)}) \bigg\langle \dfrac{\rho_i}{z-\psi_1}, \mathbbm{1}_Y \bigg\rangle_{0,2,\beta-\beta^{(1)}}^Y \bigg\langle \rho_1, \mathbbm{1}_X \bigg\rangle_{0,(Y \cdot \beta^{(1)},0),\beta^{(1)}}^{X|Y} \right) \eta^i \end{equation*}
where the $Y \cdot \beta^{(1)}$ factor comes from the weighting on the virtual class of the comb locus. Finally, we must examine the terms of $\tilde{\mathscr{R}}_{\beta,(m)}^{X|Y}(z)$ coming from:
\begin{equation*}\ev_{1*}(m\virt{\Q{0}{(m,0)}{X|Y}{\beta}})\end{equation*} 
Notice that we only have insertions from $i^*\HH^*(X) \subseteq \HH^*(Y)$, since $\ev_1$ is viewed as mapping to $X$. On the other hand
\begin{align*} \vdim \Q{0}{(m,0)}{X|Y}{\beta} & = \dim X-3 -K_X \cdot \beta +2-m & \\
& = \dim X - 1 -K_Y \cdot \beta + Y \cdot \beta - m \ \ & \text{by adjunction} \\
& \geq \dim X - 1 + Y\cdot\beta - m \ \ & \text{since $-K_Y$ is nef} \\
& \geq \dim X - 1 \ \ & \text{since $m \leq Y \cdot \beta$} \end{align*}
where in the second line we have applied the projection formula to $i$, and thus have implicitly used Assumption (2), discussed in \S \ref{Subsection setup}; namely that every curve class on $X$ comes from a class on $Y$.

Consequently the only insertions that can appear are those of dimension $0$ and $1$. However, the restriction of the $0$-dimensional class $\eta_0 = [\pt_X]$ to $Y$ vanishes, as do the restrictions of all $1$-dimensional classes except for $\eta_1$ (by the definition of the dual basis, since $\eta^1 = Y$). Thus the only insertion is $i^*\eta_1=\rho_1=[\pt_Y]$, and since $\eta^1$ has dimension $1$ all the inequalities above must actually be equalities. Thus we only have a contribution if $-K_Y \cdot \beta = 0$ and $m = Y \cdot \beta$. The contribution to $\tilde{\mathscr{R}}_{\beta,(Y\cdot\beta)}^{X|Y}(z)$ in this case is:
\begin{equation*} (Y \cdot \beta) \langle \rho_1 , \mathbbm{1}_X \rangle_{0,(Y \cdot \beta,0),\beta}^{X|Y} \eta^1 \end{equation*}

Thus we have calculated $\tilde{\mathscr{R}}_{\beta,(m)}^{X|Y}(z)$ for all $m$; substituting into equation \eqref{eqn:G} we obtain
\begin{align*} \prod_{j=0}^{Y \cdot \beta} (Y + jz) & \mathscr{S}^X_\beta(z) = \tilde{\mathscr{R}}_{\beta,(Y\cdot\beta)}^{X|Y}(z) \\
= \ & \sum_{i=1}^k \left\langle \dfrac{\rho_i}{z-\psi_1}, \mathbbm{1}_Y \right\rangle_{0,2,\beta}^Y \eta^i + \\
& \sum_{i=1}^k \left( \sum_{\substack{0 < \beta^{(1)} < \beta \\ K_Y \cdot \beta^{(1)}=0}} (Y \cdot \beta^{(1)}) \bigg\langle \dfrac{\rho_i}{z-\psi_1}, \mathbbm{1}_Y \bigg\rangle_{0,2,\beta-\beta^{(1)}}^Y \bigg\langle \rho_1, \mathbbm{1}_X \bigg\rangle_{0,(Y \cdot \beta^{(1)},0),\beta^{(1)}}^{X|Y} \right) \eta^i + \\
& (Y \cdot \beta) \langle \rho_1 , \mathbbm{1}_X \rangle_{0,(Y \cdot \beta,0),\beta}^{X|Y} \eta^1
\end{align*}
where the third term only appears if $K_Y \cdot \beta=0$. We can rewrite this as:
\begin{align*} \prod_{j=0}^{Y \cdot \beta} (Y + jz) & \mathscr{S}^X_\beta(z) \\
& = \tilde{\mathscr{S}}^Y_\beta(z) + \sum_{\substack{0 < \beta^{(1)} \leq \beta \\ K_Y \cdot \beta^{(1)}=0}} \left( (Y \cdot \beta^{(1)}) \bigg\langle \rho_1, \mathbbm{1}_X \bigg\rangle_{0,(Y \cdot \beta^{(1)},0),\beta^{(1)}}^{X|Y} \right) \tilde{\mathscr{S}}_{\beta-\beta^{(1)}}^Y(z).
\end{align*}
Summing over $\beta$, we see that equation \eqref{eqn:mirror} in the statement of Theorem \ref{Theorem Quantum Lefschetz} holds, with:
\begin{equation*} P_0^X(q) = 1 + \sum_{\substack{\beta>0 \\ K_Y\cdot\beta=0}} q^\beta (Y\cdot\beta) \langle \rho_1,\mathbbm 1_{X}\rangle_{0,(Y\cdot\beta,0),\beta}^{X|Y} \end{equation*}
To complete the proof it thus remains to show that:
\begin{equation*} P_0^X(q) = 1 + \sum_{\substack{\beta>0 \\ K_Y\cdot\beta=0}} q^\beta(Y\cdot\beta)!\langle\psi_1^{Y\cdot\beta-1} [\pt_X],\mathbbm 1_{X}\rangle_{0,2,\beta}^X \end{equation*}
The aim therefore is to express the relative invariants
\begin{equation*} \langle \rho_1 , \mathbbm{1}_X \rangle_{0,(Y\cdot\beta,0),\beta}^{X|Y} \end{equation*}
in terms of absolute invariants of $X$. Unsurprisingly, we once again do this by applying Theorem \ref{Theorem general recursion}. We have:
\begin{align*} \virt{\Q{0}{(Y \cdot \beta,0)}{X|Y}{\beta}} = \ & ((Y\cdot\beta-1)\psi_1+\ev_1^*Y) \virt{\Q{0}{(Y\cdot\beta-1,0)}{X|Y}{\beta}} \ - \\
& \virt{\mathcal{D}_{(Y\cdot\beta-1,0),1}^{\mathcal Q}(X|Y,\beta)} \end{align*}
We begin by examining the contributions from the comb loci. As before, we have only contributions coming from combs with $0$ teeth and combs with $1$ tooth. The former contributions take the form
\begin{equation*} \langle \rho_1 , \mathbbm{1}_Y \rangle_{0,2,\beta}^Y \end{equation*}
which vanish because $\vdim{\Q{0}{2}{Y}{\beta}} = \dim Y -1 -K_Y\cdot\beta = \dim Y -1$ whereas the insertion has codimension $\dim Y$. The latter contributions take the form
\begin{equation*} \langle \rho_1 ,\rho^h\rangle_{0,2,\beta^{(0)}}^Y \langle \rho_h,\mathbbm 1_{X}\rangle_{0,(Y\cdot(\beta-\beta^{(0)})-1,0),\beta-\beta^{(0)}}^{X|Y}\end{equation*}
and these must also vanish since:
\begin{align*} \codim \rho^h & = \dim Y - \codim \rho_h \\
& = \dim Y - \vdim \Q{0}{(Y\cdot(\beta-\beta^{(0)})-1,0)}{X|Y}{\beta-\beta^{(0)}} \\
& = \dim Y - (\dim X - 3 - K_X \cdot (\beta - \beta^{(0)}) + 2 - Y \cdot (\beta - \beta^{(0)}) + 1) \\
&= -1 + K_X \cdot (\beta-\beta^{(0)}) + Y \cdot (\beta-\beta^{(0)}) \\
& = -1 + K_Y\cdot(\beta-\beta^{(0)}) \\
& \leq -1
\end{align*}
Thus the comb loci do not contribute at all. Applying this recursively (the same argument as above shows that we never get comb loci contributions), we find that
\begin{align*}
(Y\cdot\beta)\langle \rho_1 ,\mathbbm 1_{X}\rangle_{0,(Y\cdot\beta,0),\beta}^{X|Y} & = (Y\cdot\beta) \langle \eta_1 \prod_{j=0}^{Y\cdot\beta-1}(Y+j\psi_1) , \mathbbm{1}_X \rangle_{0,2,\beta}^X \\
& = (Y\cdot\beta)!\langle[ \pt_X]\psi_1^{Y\cdot\beta-1},\mathbbm 1_X\rangle_{0,2,\beta}^X
\end{align*}
where the second equality holds because $Y\cdot\eta_1=\eta^1 \cdot \eta_1 = [\pt_X]$ and $Y^2\cdot\eta_1=0$. This completes the proof of Theorem \ref{Theorem Quantum Lefschetz}. \end{proof}

\begin{cor}
 If $Y$ is Fano then there is no correction term:
\begin{equation*} \sum_{\beta\geq 0} q^\beta\prod_{j=0}^{Y\cdot\beta}(Y+jz)\mathscr{S}^X_\beta(z) = \tilde{S}_0^Y(z,q) \end{equation*}
\end{cor}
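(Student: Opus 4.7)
The plan is to show that the polynomial $P_0^X(q)$ in the denominator of Theorem~\ref{Theorem Quantum Lefschetz} reduces to the constant $1$ under the Fano hypothesis, so that the formula \eqref{eqn:mirror} simplifies as stated. Concretely, I will argue that the summation
\[
\sum_{\substack{\beta>0 \\ K_Y\cdot\beta=0}} q^\beta(Y\cdot\beta)!\langle [\pt_X] \psi_1^{Y\cdot\beta-1} ,\mathbbm 1_{X}\rangle_{0,2,\beta}^X
\]
is empty under the additional assumption that $-K_Y$ is ample.

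The key step is the following observation. Under our running assumptions (\S\ref{Subsection setup}), every effective curve class $\beta \in \HH_2^+(X)$ comes from a class on $Y$ via $i_*$, so pairing against $K_Y$ makes sense and we may apply adjunction-type reasoning on $Y$ itself. If $Y$ is Fano, then by definition $-K_Y$ is ample, hence it pairs strictly positively with every nonzero effective curve class on $Y$ (by Kleiman's criterion). Consequently, for every $\beta > 0$ (which, by Assumption~(2), comes from some effective class on $Y$) we have $K_Y \cdot \beta < 0$, and so the condition $K_Y \cdot \beta = 0$ appearing in the index of the sum is never satisfied.

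Therefore the sum defining $P_0^X(q)$ is empty and $P_0^X(q) = 1$, so dividing in equation \eqref{eqn:mirror} is trivial and we obtain the claimed identity. The only mild subtlety worth pointing out — though it is not really an obstacle — is to make sure that the Fano condition on $Y$ indeed rules out \emph{all} nonzero effective classes $\beta$ on $X$, not merely those visibly supported on $Y$; this is precisely where Assumption~(2) (that $i_* \colon \Achow_1(Y) \to \Achow_1(X)$ is surjective) is used.
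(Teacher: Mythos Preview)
Your argument is correct and is precisely the intended one: the paper states the corollary without proof because it follows immediately from Theorem~\ref{Theorem Quantum Lefschetz} once one observes that the Fano hypothesis forces the index set $\{\beta>0: K_Y\cdot\beta=0\}$ to be empty, so that $P_0^X(q)=1$. The only minor quibble is terminological: the positivity $-K_Y\cdot\beta>0$ for nonzero effective $\beta$ is simply the definition (or Nakai--Moishezon characterisation) of ampleness rather than Kleiman's criterion; and your use of Assumption~(2) mirrors exactly how the paper uses it in the proof of Theorem~\ref{Theorem Quantum Lefschetz} to make sense of the pairing $K_Y\cdot\beta$ for $\beta$ a class on $X$.
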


\begin{cor}
Let $Y = Y_5 \subseteq  X = \PP^4$ be the quintic three-fold. Then
\begin{equation*} \tilde{S}_0^{Y_5}(z,q)=\dfrac{I_{\text{small}}^{Y_5}(z,q)}{P(q)} \end{equation*}
where
\begin{equation*} I_{\text{small}}^{Y_5}(z,q)=5H+\sum_{d>0}\frac{\prod_{j=0}^{5d}(H+jz)}{\prod_{j=0}^{d}(H+jz)^5} \ q^d \end{equation*}
and:
\begin{equation*} P(q)=1+\sum_{d>0}\frac{(5d)!}{(d!)^5}q^d. \end{equation*}
\end{cor}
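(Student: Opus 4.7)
The plan is to specialise Theorem \ref{Theorem Quantum Lefschetz} to the case $X = \PP^4$ and $Y = Y_5$. I would first verify the hypotheses: the quintic $Y_5$ is smooth and very ample; being a Calabi--Yau threefold it has $K_{Y_5} = 0$, so $-K_{Y_5}$ is (trivially) nef; and since $\dim X = 4 \geq 3$, the classical Lefschetz hyperplane theorem implies that $i_*\colon \Achow_1(Y_5) \to \Achow_1(\PP^4)$ is surjective, so ``$Y$ contains all curve classes''. Theorem \ref{Theorem Quantum Lefschetz} therefore applies, and the task is to unpack its right-hand side in this special case.

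The central input is the well-known quasimap $S$-function of $\PP^4$:
\begin{equation*}
S_0^{\PP^4}(z,d) \ = \ \frac{1}{\prod_{j=1}^{d}(H+jz)^5},
\end{equation*}
viewed as a class in $\HH^*(\PP^4)\otimes \QQ[[z^{-1}]]$ through the usual geometric expansion modulo $H^5$. This formula may be obtained by toric virtual localisation on $\Q{0}{2}{\PP^4}{d}$, or more quickly by invoking the Ciocan-Fontanine--Kim mirror theorem for toric varieties (which identifies the small $I$- and $J$-functions of any toric Fano, and for $\PP^4$ is a direct quasimap-theoretic analogue of Givental's original calculation). Substituting this into the numerator $\sum_{d \geq 0} q^d \prod_{j=0}^{5d}(5H+jz)\,S_0^{\PP^4}(z,d)$ appearing in \eqref{eqn:mirror} (using $Y\cdot\beta = 5d$) and regrouping the factors recovers precisely $I_{\text{small}}^{Y_5}(z,q)$ as stated.

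Next I would compute the denominator $P_0^{\PP^4}(q)$. Since $Y_5$ is Calabi--Yau, the constraint $K_Y\cdot\beta = 0$ is automatic and the sum runs over every $d > 0$. I extract the invariants $\langle [\pt_{\PP^4}]\psi_1^{5d-1}, \mathbbm 1_{\PP^4}\rangle_{0,2,d}^{\PP^4}$ directly from the $S$-function: the coefficient of $\mathbbm 1_{\PP^4}$ in an $\HH^*(\PP^4)$-valued class is isolated by formally setting $H=0$, which yields $\tfrac{1}{(d!)^5 z^{5d}}$; reading off the coefficient of $z^{-5d}$ in the series expansion $S_0^{\PP^4}(z,d) = \sum_k z^{-k-1}(\ev_1)_*(\psi_1^k[\Q{0}{2}{\PP^4}{d}])$ produces $\langle [\pt_{\PP^4}]\psi_1^{5d-1},\mathbbm 1_{\PP^4}\rangle_{0,2,d}^{\PP^4} = 1/(d!)^5$. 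Plugging this into the general formula for $P_0^X(q)$ immediately gives $P(q) = 1 + \sum_{d>0} \tfrac{(5d)!}{(d!)^5} q^d$.

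The only nontrivial ingredient here is the explicit formula for $S_0^{\PP^4}(z,d)$; once that is in hand, the remainder of the argument is a direct substitution of Calabi--Yau-specific values into Theorem \ref{Theorem Quantum Lefschetz}. The main place requiring care is the algebraic rearrangement of $\prod_{j=0}^{5d}(5H+jz)/\prod_{j=1}^{d}(H+jz)^5$ into the form displayed as $I_{\text{small}}^{Y_5}(z,q)$, which is however a purely formal manipulation.
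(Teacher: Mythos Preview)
Your proposal is correct and follows exactly the approach the paper indicates: apply Theorem~\ref{Theorem Quantum Lefschetz} and plug in the explicit $S$-function of $\PP^4$ (equivalently, its Gromov--Witten invariants, known from mirror symmetry). The paper's own proof is a one-line reference to precisely this, so your expanded verification of the hypotheses, the substitution for the numerator, and the coefficient extraction for $P(q)$ simply spell out what the paper leaves implicit.
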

\begin{proof} Apply Theorem \ref{Theorem Quantum Lefschetz} and use the fact that the quasimap invariants of $\PP^4$ coincide with the Gromov--Witten invariants, which are well-known from mirror symmetry. \end{proof}

\begin{remark}
Theorem \ref{Theorem Quantum Lefschetz} agrees with \cite[Theorem~1]{CZ-mirror} when $X$ is a projective space.
\end{remark}

\subsection{Comparison with the work of Ciocan-Fontanine and Kim} \label{Subsection CFK comparison}
Here we briefly explain how to compare our Theorem \ref{Theorem Quantum Lefschetz} to a formula obtained by Ciocan-Fontanine and Kim. We assume that the reader is familiar with the paper \cite{CF-K-wallcrossing}, in particular \S4 and \S5 thereof. There they introduce (in the more general context of $\epsilon$-stable quasimaps) the following generating functions for quasimap invariants of $Y$:
\begin{enumerate}
\item[(i)] The \emph{$J^{\epsilon}$-function}
\begin{equation*} J^\epsilon({\bf t}, z)=\sum_{m\geq 0,\beta\geq 0} \dfrac{q^\beta}{m!} (\ev_\bullet)_*\left( \prod_{i=1}^m \ev_i^*({\bf t}) \cap\operatorname{Res}_{F_0} \virt{\QGe{0}{m}{Y}{\beta}} \right) \end{equation*}
for $\mathbf{t} \in \HH^*(X)$. Here $\QGe{0}{m}{Y}{\beta}$ is the moduli space of $\epsilon$-stable quasimaps with a parametrised component \cite[\S 7.2]{CFKM}, $F_0$ is a certain fixed locus of the natural $\Gm$-action on this space, and $\ev_\bullet$ is the evaluation at the point $\infty \in \PP^1$ on the parametrised component. $\operatorname{Res}_{F_0}$ is the residue of the virtual class, i.e. the virtual class of the fixed locus divided by the Euler class of the virtual normal bundle (see \cite{GraberPandharipande} for details on virtual localisation). The variable $z$ is the $\Gm$-equivariant parameter.
\item[(ii)] The \emph{$S^\epsilon$-operator}
\begin{equation*}
 S^\epsilon(\mathbf{t},z)(\gamma)=\sum_{m\ge 0,\beta\ge 0}\frac{q^\beta}{m!} 
(\ev_1)_*\left( \dfrac{\ev_2^*(\gamma) \cdot \prod_{j=3}^{2+m} \ev_j^*({\bf t})}{z-\psi_1} \cap\virt{\Qe{0}{2+m}{Y}{\beta}} \right)
\end{equation*}
where $\mathbf{t}, \gamma \in \HH^*(X)$ and $z$ is a formal variable.
\item[(iii)] The \emph{$P^\epsilon$-series}
\begin{equation*}
 P^\epsilon({\bf t}, z)=\sum_{h=1}^k \rho^h \sum_{m\geq 0,\beta\geq 0} \frac{q^\beta }{m!} \left( \ev_1^*(\rho_h \boxtimes p_\infty) \cap \virt{\QGe{0}{1+m}{Y}{\beta}} \right) \end{equation*}
where $\mathbf{t} \in \HH^*(X)$ and $z$ is the $\Gm$-equivariant parameter. Here we view $\ev_1$ as mapping to $Y \times \PP^1$, and $p_\infty\in \HH^*_{\Gm}(\PP^1)$ is the equivariant cohomology class defined by setting $p_{\infty}|_0 =0$ and $p_{\infty}|_{\infty}=-z$.
\end{enumerate}
Given these definitions, Ciocan-Fontanine and Kim use localisation with respect to the $\Gm$-action on the parametrised space to prove the following formula \cite[Theorem 5.4.1]{CF-K-wallcrossing}:
\[
 J^\epsilon(\mathbf{t},z)=S^\epsilon(\mathbf{t},z)(P^\epsilon(\mathbf{t},z))
\]
They observe that if we set ${\bf t}=0$ and restrict to semi-positive targets, then the only class that matches non-trivially with ${P^\epsilon}|_{\mathbf{t}=0}$ is $[\pt_Y]$. Hence the above formula takes the simple form
\begin{equation} \label{Ciocan-Fontanine Theorem t zero}
 \frac{J^\epsilon |_{{\bf t}=0}}{\langle [\pt_Y],  P^\epsilon|_{{\bf t}=0}\rangle}=S^\epsilon(\mathbbm{1}_Y)|_{\mathbf{t}=0} = \mathbbm 1_Y+\sum_{h=1}^k \rho^h \left(\sum_{\beta> 0}q^\beta\left\langle\frac{\rho_h}{z-\psi},\mathbbm 1_Y\right\rangle_{0,2,\beta}^{Y,\epsilon} \right)\end{equation}
see \cite[Corollary 5.5.1]{CF-K-wallcrossing}. In our setting, $\epsilon=0+$ and $Y$ embeds as a very ample hypersurface in a toric Fano variety $X$. Our Theorem \ref{Theorem Quantum Lefschetz} makes explicit a consequence of formula \eqref{Ciocan-Fontanine Theorem t zero}. More precisely:
\begin{lem} We have the following relations between our generating functions and the generating functions of Ciocan-Fontanine and Kim:
\begin{align}
\label{J epsilon equation} i_*J^{0+}|_{\mathbf{t}=0} & = \sum_{\beta \geq 0} q^\beta \prod_{j=0}^{Y \cdot \beta} (Y + jz) \mathscr{S}^X_\beta(z) \\
\label{P epsilon equation} \langle [\pt_Y], P^{0+}|_{\mathbf{t}=0} \rangle & = P_0^X(q) \\
\label{S epsilon equation} i_*S^{0+}(\mathbbm{1}_Y)|_{\mathbf{t}=0} & = \tilde{S}^Y_0(z,q)
\end{align}
\end{lem}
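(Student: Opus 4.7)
The plan is to verify each of the three equations in turn. Equation~\eqref{S epsilon equation} is essentially definitional; the more delicate equations \eqref{J epsilon equation} and \eqref{P epsilon equation} will be obtained by $\Gm$-localisation on the quasimap graph space, combined with applications of the recursion formula (Theorem~\ref{Theorem general recursion}).

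For \eqref{S epsilon equation}, setting $\mathbf{t}=0$ in the definition of $S^{0+}(\mathbf{t},z)(\gamma)$ kills every summand with $m\geq 1$, since each such summand contains a factor $\ev_j^*(\mathbf{t})$. The surviving $m=0$ summand with $\gamma=\mathbbm{1}_Y$ satisfies $\ev_2^*(\mathbbm{1}_Y)=1$, giving
\[
S^{0+}(\mathbbm{1}_Y)\big|_{\mathbf{t}=0} \;=\; \sum_{\beta\geq 0} q^{\beta}\,(\ev_1)_*\!\left(\frac{1}{z-\psi_1}\cap \virt{\Q{0}{2}{Y}{\beta}}\right) \;=\; S_0^Y(z,q).
\]
Applying $i_*$ and invoking Lemma~\ref{lemma pushforward} yields $\tilde{S}_0^Y(z,q)$.

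For \eqref{P epsilon equation}, the same truncation leaves only the $m=0$ summands of $P^{0+}|_{\mathbf{t}=0}$, and pairing against $[\pt_Y]=\rho_1$ isolates the $\rho^1 = \mathbbm{1}_Y$-coefficient. I would then localise the resulting integral on $\QGe{0}{1}{Y}{\beta}$ with respect to the $\Gm$-action on the parametrised $\PP^1$. Because $p_\infty|_0 = 0$, the residue is supported on the fixed locus for which the marked point lies at $\infty$; because the quasimap on the parametrised component must be $\Gm$-invariant, $0+$-stability forces the entire curve class $\beta$ to be absorbed into a basepoint at $0$ of multiplicity $Y\cdot\beta$. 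The resulting fixed-locus integral, together with the point insertion at $\infty$, can be identified with the relative invariant $(Y\cdot\beta)\langle\rho_1,\mathbbm{1}_X\rangle^{X|Y}_{0,(Y\cdot\beta,0),\beta}$. A dimension count identical to the one performed in the proof of Theorem~\ref{Theorem Quantum Lefschetz} forces vanishing unless $K_Y\cdot\beta = 0$, and in that case repeated application of Theorem~\ref{Theorem general recursion} (exactly as in the closing paragraphs of that proof) converts this relative invariant into $(Y\cdot\beta)!\,\langle[\pt_X]\psi_1^{Y\cdot\beta-1},\mathbbm{1}_X\rangle_{0,2,\beta}^X$. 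Summing over $\beta$ produces $P_0^X(q)$ on the nose.

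Equation~\eqref{J epsilon equation} is proved by a parallel argument. Localising on $\QGe{0}{0}{Y}{\beta}$, the residue $\operatorname{Res}_{F_0}$ is supported on the fixed locus where the full degree $\beta$ concentrates as a basepoint at $0\in\PP^1$, and $\ev_\bullet$ is the evaluation at $\infty$. Combining with $i_*$, the equivariant Euler class of the virtual normal bundle of $F_0$ should contribute precisely the polynomial factor $\prod_{j=0}^{Y\cdot\beta}(Y+jz)$, with the remaining integral reproducing $S_0^X(z,\beta)$. The main obstacle will be the normal-bundle computation itself: matching the equivariant Euler class of the virtual normal bundle of $F_0$ against this explicit polynomial requires a careful analysis of how the line bundle $L_Y$ and the relative obstruction theory behave at the basepoint at $0$. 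Fortunately, the machinery for precisely this kind of localisation analysis has been developed in \cite{CF-K-wallcrossing}, and can be applied with only minor adaptations to our setting.
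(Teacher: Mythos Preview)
Your treatment of \eqref{S epsilon equation} matches the paper's and is fine.

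For \eqref{J epsilon equation}, your overall localisation strategy is right, but the source of the polynomial factor is misidentified. The factor $\prod_{j=0}^{Y\cdot\beta}(Y+jz)$ does \emph{not} come from the virtual normal bundle of $F_0^Y$ inside $\QG{0}{0}{Y}{\beta}$; in fact, the Euler classes of the virtual normal bundles of $F_0^Y$ and $F_0^X$ agree. The paper instead uses the quantum Lefschetz relation $i_*\virt{\QG{0}{0}{Y}{\beta}} = e(\pi_* E^Y_{0,0,\beta}) \cap \virt{\QG{0}{0}{X}{\beta}}$ and computes this Euler class explicitly via the jet-bundle filtration, obtaining $\prod_{j=0}^{Y\cdot\beta}(\ev_\infty^* Y + jz)$ on the fixed locus. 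The remaining residue $(\ev_\bullet)_*\operatorname{Res}_{F_0^X}\virt{\QG{0}{0}{X}{\beta}}$ is then identified with $S_0^X(z,\beta)$ by applying Ciocan-Fontanine--Kim's own formula \eqref{Ciocan-Fontanine Theorem t zero} to $X$ (whose $P$-term is trivial since $X$ is Fano).

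For \eqref{P epsilon equation} there is a genuine gap. You propose to localise on $\QG{0}{1}{Y}{\beta}$ and then ``identify'' the fixed-locus integral with the relative invariant $(Y\cdot\beta)\langle\rho_1,\mathbbm{1}_X\rangle^{X|Y}_{0,(Y\cdot\beta,0),\beta}$. But the fixed locus lives in the graph space of quasimaps to $Y$, whereas the relative invariant is an integral over the two-pointed relative space $\Q{0}{(Y\cdot\beta,0)}{X|Y}{\beta}$ of quasimaps to $X$; no mechanism is given for passing between these, and indeed there is no direct one. The paper avoids this entirely: it quotes from \cite{CF-K-wallcrossing} the fact that $\langle[\pt_Y],P^{0+}|_{\mathbf{t}=0}\rangle$ equals the $z^0$-coefficient $J_0^{0+}(q)$ of $J^{0+}|_{\mathbf{t}=0}$, then reads off $J_0^{0+}(q)$ from the already-established \eqref{J epsilon equation} combined with Givental's explicit closed formula for $S_0^X(z,\beta)$. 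The same explicit formula is used to compute $\langle[\pt_X]\psi_1^{Y\cdot\beta-1},\mathbbm{1}_X\rangle_{0,2,\beta}^X$ directly, and the two expressions are seen to coincide. No recursion and no relative invariants enter this step.
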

\begin{proof}
\eqref{S epsilon equation} is clear from the second equality of \eqref{Ciocan-Fontanine Theorem t zero} and the definition of $\tilde{S}^Y_0(z,q)$. To show  \eqref{J epsilon equation}, let us look more closely at the left-hand side:
\begin{equation*} {J^{0+}}|_{{\mathbf{t}=0}} = \sum_{\beta\geq 0}q^\beta(\ev_\bullet)_* \left( \operatorname{Res}_{F_0}\virt{\QG{0}{0}{Y}{\beta}} \right) \end{equation*}
We have a diagram of fixed loci and evaluation maps
\bcd
\QG{0}{0}{Y}{\beta}\ar[d,hook,"i"]\ar[dr,phantom,"\Box"] & F_0^Y\ar[d, hook, "i"]\ar[l,hook']\ar[r,"\ev_{\bullet}"] & Y\ar[d,hook,"i"] \\
\QG{0}{0}{X}{\beta} & F_0^X\ar[l,hook']\ar[r,"\ev_{\bullet}"] & X
\ecd
and by a mild generalisation of \cite[Propositions 6.2.2 and 6.2.3]{CFKM}, we have an equality of $\Gm$-equivariant classes
\begin{equation*} i_*\virt{\QG{0}{0}{Y}{\beta}}=e(\pi_* E^Y_{0,0,\beta})\cap \virt{\QG{0}{0}{X}{\beta}} \end{equation*} 
where $\pi$ is the universal curve on $\QG{0}{0}{X}{\beta}$ and $E^Y_{0,0,\beta}$ is the equivariant line bundle on this curve associated to $\mathcal O_X(Y)$. This is the parametrised analogue of the bundle $L_Y$ constructed in the definition of relative quasimaps; see \S \ref{Subsection relative stable quasimaps}.
 
We would like to pull back this equation to the fixed locus $F_0^X$ in order to obtain an equation involving the residues. Let us first briefly recall the definition of $F_0^X$. Since there are no markings, any quasimap in $\QG{0}{0}{X}{\beta}$ has irreducible source curve. For such a quasimap to be $\Gm$-fixed we need that the induced rational map is constant; this means that the degree of the quasimap is concentrated at the basepoints (i.e. the sum of the lengths of the basepoints should be equal to the degree). Furthermore only the points $0$ and $\infty$ of the parametrised component are allowed to be basepoints. The fixed loci are thus indexed by ordered partitions of the degree which record the length of the basepoints at $0$ and $\infty$. $F_0^X$ is the locus on which all the degree is concentrated at $0$. This means that $\infty$ is not a basepoint and we have an evaluation map $\ev_\infty$ (denoted $\ev_{\bullet}$ earlier). See \cite[\S 4]{CF-K-wallcrossing} for more details: our $F_0^X$ is there denoted $F^{0,0,0}_{0,0,\beta}$.

Since the fibres of $\pi$ are irreducible and rational, the degree of the universal line bundle on the parametrised component is constant; therefore we have for $0 < j \leq Y\cdot\beta + 1$ an exact sequence:
\begin{equation*} 0 \to \pi_* (E^Y_{0,0,\beta}(-j\sigma_\infty)) \to \pi_* E^Y_{0,0,\beta} \to \sigma_\infty^*\mathcal{P}^{j-1}(E^Y_{0,0,\beta}) \to 0 \end{equation*}
where $\mathcal{P}^{j-1}$ denotes the bundle of $(j-1)$-jets, and $\sigma_{\infty}$ is the section given by the point $\infty \in \PP^1$ of the parametrised component. The right-hand map is given by evaluating a section of $E_{0,0,\beta}^Y$ (as well as its derivatives up to order $j-1$) at the point $\infty$. The left-hand term consists of sections of $E_{0,0,\beta}^Y$ which vanish at $\sigma_\infty$ to order $j$. If we set $j=Y\cdot\beta+1$ then this term vanishes and we have:
\begin{equation*} \pi_* E_{0,0,\beta}^Y = \sigma_\infty^* \mathcal{P}^{Y\cdot\beta}(E_{0,0,\beta}^Y) \end{equation*}
On the other hand, we have
\begin{equation*} 0 \to E^Y_{0,0,\beta} \otimes \omega_\pi^{\otimes j} \to \mathcal{P}^{j}(E^Y_{0,0,\beta}) \to \mathcal{P}^{j-1}(E^Y_{0,0,\beta}) \to 0
\end{equation*}
see \cite[\S 2]{Ga}. Pulling back along $\sigma_\infty$ and taking Euler classes, we can compute recursively from $j = Y \cdot \beta$ to $0$ and obtain a splitting
\begin{equation*}
e(\pi_* E^Y_{0,0,\beta})=\prod_{j=0}^{Y\cdot\beta} c_1(\sigma_\infty^* E^Y_{0,0,\beta}\otimes \omega_{\infty}^{\otimes j})
\end{equation*}
where $\omega_\infty=\sigma_\infty^*\omega_\pi$ gives the cotangent space at the point $\infty$. The bundle $\omega_\infty$ is (non-equivariantly) trivial since the source curves in $F_0^X$ are rigid; on the other hand the weight of the $\Gm$-action on the cotangent space at $\infty$ is $z$. We thus obtain:
\begin{equation*} i_*\virt{F_0^Y}=\prod_{j=0}^{Y\cdot\beta}(\ev_\infty^* Y+jz)\cap \virt{F_0^X} \end{equation*}
Furthermore, the Euler classes of the virtual normal bundles match under $i$. Substituting into $i_*J^{0+}|_{\mathbf{t}=0}$ we find that:
\begin{align*} i_* {J^{0+}}|_{{\mathbf{t}=0}} & = \sum_{\beta\geq 0}q^\beta(i\circ\ev_\bullet)_* \left( \operatorname{Res}_{F_0^Y}\virt{\QG{0}{0}{Y}{\beta}} \right) \\
& = \sum_{\beta \geq 0} q^\beta \prod_{j=0}^{Y \cdot \beta} (Y + jz) (\ev_{\bullet})_* \left( \operatorname{Res}_{F_0^X}\virt{\QG{0}{0}{X}{\beta}} \right) \end{align*}
On the other hand, if we apply \eqref{Ciocan-Fontanine Theorem t zero} with $X$ instead of $Y$, then the denominator on the left-hand side vanishes since $X$ is Fano. Comparing coefficients of $q^\beta$ we thus obtain
\begin{equation*} (\ev_{\bullet})_* \operatorname{Res}_{F_0^X}\virt{\QG{0}{0}{X}{\beta}} = \mathscr{S}^X_\beta(z) \end{equation*}
from which it follows that:
\begin{equation*} i_*J^{0+}|_{\mathbf{t}=0} = \sum_{\beta \geq 0} q^\beta \prod_{j=0}^{Y\cdot\beta}(Y+jz) \mathscr{S}^X_\beta(z)\end{equation*}
This proves \eqref{J epsilon equation}. It remains to show \eqref{P epsilon equation}. According to Ciocan-Fontanine and Kim, if we write the $1/z$-expansion of ${J^{\epsilon}}|_{\mathbf{t}=0}$ as
\begin{equation*} {J^{\epsilon}}|_{\mathbf{t}=0}=J^{\epsilon}_{0}(q)\mathbbm 1_Y+O(1/z) \end{equation*}
then $\langle [\pt_Y],  P^\epsilon|_{{\bf t}=0}\rangle=J^{\epsilon}_{0}(q)$. It thus remains to prove that $J^{0+}_0(q)=P_0^X(q)$.

Since $X$ is a toric Fano variety, we have the following calculation of residues due to Givental \cite{Givental-mirror} (see also \cite[Definition 7.2.8]{CF-K}):
\begin{align*}
\mathscr{S}^X_\beta(z) =\prod_{\rho\in\Sigma_X(1)}\dfrac{\prod_{j=-\infty}^0(D_{\rho}+jz)}{\prod_{j=-\infty}^{D_{\rho}\cdot \beta}(D_\rho+jz)}
=\dfrac{\prod_{\substack{\rho \colon D_\rho \cdot \beta\leq 0}} \prod_{j=D_\rho \cdot \beta}^0 (D_{\rho}+jz)}{\prod_{\substack{\rho\colon D_\rho \cdot\beta > 0}} \prod_{j=1}^{D_\rho\cdot\beta} (D_{\rho}+jz)}
\end{align*}
We can then apply equation \eqref{J epsilon equation} to find $i_*J^{0+}|_{\mathbf{t}=0}$, and hence also to find $J^{0+}_0(q)$. In the end we obtain:
\begin{equation*}
 J^{0+}_0(q)=\sum_{\beta\geq 0}q^\beta(Y\cdot\beta)!\frac{\prod_{\rho\colon D_\rho\cdot\beta< 0}(-1)^{-D_{\rho}\cdot\beta}(-D_{\rho}\cdot\beta)!}{\prod_{\rho\colon D_\rho\cdot\beta> 0}(D_{\rho}\cdot\beta)!}
\end{equation*}
On the other hand the coefficient
\begin{equation*} \langle[\pt_X]\psi_1^{Y\cdot\beta-1},\mathbbm 1_X\rangle_{0,2,\beta}^X\end{equation*}
which appears in our $P_0^X(q)$-series also appears in $\mathscr{S}^X_\beta(z)$. So again we can find it by appealing to Givental's calculation of $S_0^X(z,q)$
\begin{align*}
 \langle[\pt_X]\psi_1^{Y\cdot\beta-1},\mathbbm 1_X\rangle_{0,2,\beta}^X &=\operatorname{coeff}_{q^\beta z^{-Y\cdot\beta}}\langle [\pt_X],S_0^X(z,q) \rangle\\
& =\frac{\prod_{\rho\colon D_\rho\cdot\beta< 0}(-1)^{-D_{\rho}\cdot\beta}(-D_{\rho}\cdot\beta)!}{\prod_{\rho\colon D_\rho\cdot\beta> 0}(D_{\rho}\cdot\beta)!}
\end{align*}
which proves \eqref{P epsilon equation}. We thus conclude that \eqref{Ciocan-Fontanine Theorem t zero} implies our Theorem~\ref{Theorem Quantum Lefschetz}. \end{proof}

\section{Relative wall-crossing}\label{section wallcrossing}
\subsection{Context}
The classical mirror theorem, due to Givental, equates a certain generating function for Gromov--Witten invariants --- the $J$-\emph{function} --- with an explicit hypergeometric function --- the $I$-\emph{function} --- after a suitable change of variables called the mirror map \cite{Givental-mirror}. A fundamental insight is that the $I$-function may be interpreted as a generating function for \emph{quasimap invariants}. From this perspective, the mirror theorem breaks into two parts:
\begin{enumerate}
\item[(i)] find an explicit formula for the quasimap generating function;
\item[(ii)] prove a wall-crossing formula, relating the quasimap and Gromov--Witten generating functions via a change of variables.
\end{enumerate}
This basic strategy was pursued, with great success, in a series of papers by Ciocan-Fontanine--Kim \cite{CFKBigI,CF-K-wallcrossing,CF-K-MirrorSymmetry}.

Recently \cite{FanTsengYou} Fan--Tseng--You have used the correspondence between relative and orbifold Gromov--Witten invariants \cite{AbramovichCadmanWise} to obtain a version of the mirror theorem in the relative setting (without using quasimaps). They write down an explicit combinatorial formula for the relative $I$-function of a smooth pair $(X,Y)$, under the assumption that the pair is sufficiently semipositive and that the absolute $J$-function of $X$ is known. They then show \cite[Theorem 4.3]{FanTsengYou} that their relative $I$-function and the relative $J$-function coincide after a change of variables.

In this final section, we will show that their relative $I$-function coincides with a natural generating function $I_\beta^{X|Y}(z,0)$ for relative quasimap invariants. This provides strong evidence for our main hypothesis, namely that relative quasimap invariants provide a means to generalise the mirror theorems of Givental and Ciocan-Fontanine--Kim to the relative setting. In ongoing work in progress, we follow up on this claim by developing a fully-fledged, general theory of logarithmic quasimaps, proving reconstruction and wall-crossing formulas in this context.

\subsection{Comparison of relative $I$-functions} We begin by establishing notation. Fix as before a smooth very ample pair $(X,Y)$. We define our relative $I$-function as the following formal power series in the cohomology of $Y$
\begin{align*} I^{X|Y}(q,z,0) = \sum_\beta q^\beta \mathscr{S}_{0,(Y\cdot \beta)}^{X|Y}(z) & = \sum_\beta q^\beta (\ev_1)_*\left( \frac{1}{z-\psi_1}\virt{\Q{0}{(Y\cdot\beta,0)}{X|Y}{\beta}}\right) \\
& = \sum_\beta q^\beta \sum_{i=0}^l \left\langle \dfrac{\rho_i}{z- \psi_1}, \mathbbm{1}_X \right\rangle^{X|Y}_{0,(Y\cdot\beta,0),\beta} \rho^i \in \HH^*(Y)\llbracket z \rrbracket \end{align*}
where $\ev_1$ is viewed as mapping to $Y$ (we ignore the terms for which $Y\cdot\beta=0$). Just as in Lemma \ref{lemma pushforward} we have:
\begin{equation*} i_* I^{X|Y}(q,z,0) = \sum_\beta q^\beta \widetilde{S}_{0,(Y\cdot\beta)}^{X|Y}(z).\end{equation*}
On the other hand there is the Fan--Tseng--You relative $I$-function at $t=0$, which may be written as:
\begin{equation}\label{FTY I-fn} I_{\mathrm{FTY}}^{X|Y}(q,z,0) = \sum_\beta q^\beta i^* \left( J^X_{\beta}(z,0) \cdot \prod_{m=1}^{Y\cdot\beta-1}(Y+mz) \right) \in \HH^*(Y)\llbracket z \rrbracket. \end{equation}
See \cite[Theorem 4.3]{FanTsengYou}, and \cite[\S 7.1]{FanWuYou} for the definition of the product structure used in the first reference.
\begin{thm} \label{wallcrossing thm} Assume as in \S \ref{Subsection setup} that $Y$ is semipositive. Then we have:
\begin{equation*} i^* i_* I^{X|Y}(q,z,0) = z^{-1}\cdot i^* i_* I_{\mathrm{FTY}}^{X|Y}(q,z,0).\end{equation*}
\end{thm}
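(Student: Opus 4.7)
The strategy is to iterate Theorem \ref{Theorem general recursion} in the spirit of the proof of Theorem \ref{Theorem Quantum Lefschetz}, and then to match the resulting expression against the Fan--Tseng--You $I$-function through the standard string-equation identification available on the Fano side.

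First I would apply the recursion \eqref{Gathmann recursion via generating functions} repeatedly, starting from $\widetilde{S}_{0,(0)}^{X|Y}(z,\beta) = S_0^X(z,\beta)$ and stepping up the tangency one unit at a time until reaching $\widetilde{S}_{0,(e)}^{X|Y}(z,\beta)$, where $e = Y\cdot\beta$. This yields
\begin{equation*}
\widetilde{S}_{0,(e)}^{X|Y}(z,\beta) = \prod_{j=0}^{e-1}(Y+jz)\cdot S_0^X(z,\beta) \; - \; \sum_{m=0}^{e-1}\prod_{j=m+1}^{e-1}(Y+jz)\cdot\widetilde{T}_{0,(m)}^{X|Y}(z,\beta).
\end{equation*}
The key point is that the vanishing analysis performed in the proof of Theorem \ref{Theorem Quantum Lefschetz} applies, verbatim, at every intermediate tangency: the combinatorial dimension counts on combs with zero or one tooth used there force $m^{(1)} = Y\cdot\beta^{(1)}$ and $K_Y\cdot\beta^{(1)} = 0$, which together imply $m = e$ for any nonzero contribution. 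Hence $\widetilde{T}_{0,(m)}^{X|Y}(z,\beta) = 0$ for all $m<e$, and the sum collapses to $i_* I_\beta^{X|Y}(z,0) = \prod_{j=0}^{e-1}(Y+jz)\cdot S_0^X(z,\beta)$.

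Second, under the assumptions of \S \ref{Subsection setup}, $X$ is a smooth projective toric Fano variety, so by \cite[Corollary 1.3]{CF-K-higher-genus} its quasimap invariants coincide with its Gromov--Witten invariants in all genera. The string equation then gives the standard identity $S_0^X(z,\beta) = z^{-1}\,J^X_\beta(z,0)$ with the Fan--Tseng--You normalisation of the small $J$-function. Combining this with the previous step, applying $i^*$ to both sides, and invoking the self-intersection formula $i^* i_* = i^*[Y]\cdot(-)$ together with the projection formula, I would compute
\begin{align*}
i^* i_* I_\beta^{X|Y}(z,0) &= \prod_{j=0}^{e-1}(i^*Y+jz)\cdot i^* S_0^X(z,\beta), \\
i^* i_* I_{\beta,\mathrm{FTY}}^{X|Y}(z,0) &= i^*\!\left(Y\cdot J^X_\beta(z,0)\prod_{m=1}^{e-1}(Y+mz)\right) = \prod_{m=0}^{e-1}(i^*Y+mz)\cdot i^* J^X_\beta(z,0).
\end{align*}
Substituting $i^* S_0^X(z,\beta) = z^{-1}\, i^* J^X_\beta(z,0)$ into the first expression produces precisely the factor of $z^{-1}$ relating it to the second, which is the claim.

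The main obstacle will be justifying the vanishing $\widetilde{T}_{0,(m)}^{X|Y}(z,\beta) = 0$ for all $m<e$: one must confirm that the dimension estimates used in Theorem \ref{Theorem Quantum Lefschetz} (applied there only at the maximal tangency $m=e$) genuinely obstruct every intermediate step of the recursion, rather than only the terminal one. A secondary bookkeeping point is to align conventions between our $2$-pointed generating function $S_0^X$ and the small $J$-function of \cite{FanTsengYou}, so that the $z^{-1}$ prefactor emerges without any stray shift in $z$ or in the curve class.
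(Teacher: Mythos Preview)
Your proposal is correct and follows essentially the same route as the paper: iterate \eqref{Gathmann recursion via generating functions}, use the dimension counts from the proof of Theorem \ref{Theorem Quantum Lefschetz} to kill all $\widetilde{T}_{0,(m)}^{X|Y}$ for $m<e$, then invoke Fano wall-crossing plus the string equation to identify $S_0^X(z,\beta)=z^{-1}J^X_\beta(z,0)$ and finish by applying $i^*$ and the self-intersection formula. Your flagged ``main obstacle'' is not actually one: in the proof of Theorem \ref{Theorem Quantum Lefschetz} the dimension estimates are carried out for \emph{all} $m$ and already show that every comb-locus and relative-space contribution forces $m=Y\cdot\beta$, so the vanishing for $m<e$ is immediate rather than something new to check.
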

\begin{remark} The effect of applying $i^* i_*$ is to remove the terms corresponding to the non-restricted quasimap invariants. This is an artefact of the proof, since the recursion formula established in \S \ref{section recursion formula} only deals with restricted insertions. We expect that the final statement holds without this caveat.\end{remark}

\begin{proof}
It suffices to fix $\beta \in \HH_2^+(X)$ and compare the coefficients of $q^\beta$ on each side. Let us therefore do this, and set $e=Y\cdot\beta$. Applying \eqref{Gathmann recursion via generating functions} from \S \ref{subsection quasimap quantum Lefschetz formula} repeatedly, we obtain the following formula in $\HH^*(X)\llbracket z \rrbracket$:
\begin{equation*} \left( \prod_{m=0}^{e-1} (Y+mz) \right) \mathscr{S}^{X}_\beta(z) = \tilde{\mathscr{S}}_{\beta,(e)}^{X|Y}(z) + \sum_{k=0}^{e-1} \left(\prod_{m=k+1}^{e-1}(Y+mz) \right) \tilde{\mathscr{R}}^{X|Y}_{\beta,(k)}(z).\end{equation*}
By a dimension counting argument similar to the one given in the proof of Theorem \ref{Theorem Quantum Lefschetz}, we see that $\tilde{\mathscr{R}}^{X|Y}_{\beta,(k)}(z)=0$ for $k=0,\ldots,e-1$, so that the formula reads:
\begin{equation}\label{halfway to comparison of I} \tilde{\mathscr{S}}^{X|Y}_{\beta,(e)}(z) = \left( \prod_{m=0}^{e-1}(Y+mz) \right) \mathscr{S}_\beta^X(z).\end{equation}
Since $Y$ is semipositive and very ample, we have that $X$ is Fano. Hence its quasimap invariants coincide with its Gromov--Witten invariants (see \S \ref{subsection quasimaps}). The string equation then gives:
\begin{equation*} \mathscr{S}^X_{\beta}(z) = z^{-1} \cdot J^X_\beta(z,0).\end{equation*}
Applying $i^*$ to \eqref{halfway to comparison of I} we then obtain
\begin{align*} i^* \tilde{\mathscr{S}}^{X|Y}_{\beta,(e)}(z) & = z^{-1}\cdot i^* \left(J^X_\beta(z,0) \cdot \prod_{m=0}^{e-1}(Y+mz) \right) \\
& = z^{-1} \cdot i^*Y \cdot I_{\beta,\mathrm{FTY}}^{X|Y}(z,0)\\
& = z^{-1} \cdot i^* i_* I_{\beta,\mathrm{FTY}}^{X|Y}(z,0)\end{align*}
where $I_{\beta,\mathrm{FTY}}^{X|Y}(z,0)$ is the bracketed term in \eqref{FTY I-fn}. The claim follows.\end{proof}

\begin{remark} This result is unnecessarily restrictive, and is best thought of as a proof-of-concept. For one, it establishes wall-crossing only in the $t=0$ case (i.e. we only consider generating functions for two-pointed invariants with a fundamental class insertion). More importantly, the proof is not geometric; it \emph{uses} the relative mirror theorem \cite[Theorem 4.3]{FanTsengYou}, rather than providing a new proof of this result. We plan to correct both these defects, and more, in our upcoming work on logarithmic quasimap theory.
\end{remark}

\appendix

\section{Intersection-theoretic lemmas}\label{appendix:intersection}

In this appendix we explicitly define the \emph{diagonal pull-back} along a morphism whose target is unobstructed (used in \cite{Ga}) and verify that this agrees with the virtual pull-back of \cite{Manolache-Pull} when both are defined. We also check that it satisfies some expected compatibility properties.

Consider a morphism of DM stacks $f\colon Y\to X$ over a smooth base $\mathfrak M$, such that $X$ is smooth over $\mathfrak M$ and $Y$ carries a virtual class given by a perfect obstruction theory $\EE_{Y/\mathfrak M}$. Then, for every cartesian diagram
 \bcd
 G\ar[r,"g"]\ar[d,"q"]\ar[rd,phantom,"\Box"] & F\ar[d,"p"] \\
 Y\ar[r,"f"] & X 
 \ecd
and every class $\alpha\in \Achow_*(F)$, we may define
\[
 f^!_{\Delta}(\alpha)=\Delta_X^!([Y]^\text{vir}\times\alpha)\in \Achow_*(G)
\]
which we call the \emph{diagonal pull-back}. We first show that it coincides with the usual virtual pull-back along $f$ in the presence of a compatible perfect obstruction theory for $f$.

\begin{lem}\label{lem:diagonal_virtual_coincide}
Assume that there exists a relative obstruction theory $\EE_f$ compatible with $\EE_{Y/\mathfrak M}$ and the standard (unobstructed) obstruction theory for $X$, i.e:
 \bcd
 f^* \LL_{X/\mathfrak M} \ar[r] \ar[d,"\operatorname{Id}"] & \EE_{Y/\mathfrak M} \ar[r] \ar[d] & \EE_f \ar[r,"{[1]}"] \ar[d] & \, \\
 f^* \LL_{X/\mathfrak M} \ar[r] & \LL_{Y/\mathfrak M} \ar[r] & \LL_f \ar[r,"{[1]}"] & \,
 \ecd
 Then for every cartesian diagram and every class $\alpha\in \Achow_*(F)$ as above,
 \[
  f^!_{\operatorname{v}}(\alpha)=f^!_\Delta(\alpha).
 \]
\end{lem}
\begin{proof}

Consider the following cartesian diagram:
\bcd
G \ar[r,"q \times g"] \ar[d,"g"] \ar[rd,phantom,"\square"] & Y\times_{\mathfrak M}F \ar[r,"\pr_1"] \ar[d,"f \times \Id"] \ar[rd,phantom,"\square"] & Y \ar[d,"f"] \\
F \ar[r,"p \times \Id"] \ar[d,"p"] \ar[rd,phantom,"\square"] & X\times_{\mathfrak M} F \ar[r,"\pr_1"] \ar[d,"\Id \times p"] & X \\
X \ar[r,"\Delta_X"] & X\times_{\mathfrak M}X &
\ecd
Then, by commutativity of (virtual) pull-backs, we have
\begin{align*} \Delta_X^!([Y]^\text{vir}\times\alpha) & = \Delta^!((f^!_{\operatorname{v}}[X])\times\alpha) \\
& = \Delta_X^!(f^!_{\operatorname{v}}([X]\times\alpha)) \\
& = f^!_{\operatorname{v}}(\Delta_X^!([X]\times\alpha)) \\
& = f^!_{\operatorname{v}}(\alpha)
\end{align*}
as required.
\end{proof}

Secondly, we show that the diagonal pull-back behaves similarly to an ordinary virtual pull-back (e.g. commutes with other virtual pull-backs) even in the absence of a compatible perfect obstruction theory.

\begin{lem} The diagonal pull-back morphism as defined above commutes with ordinary Gysin maps and with virtual pull-backs. \end{lem}
\begin{proof} First consider the case of ordinary Gysin maps. We must consider a cartesian diagram:
\bcd
Y^{\prime \prime} \ar[r] \ar[d] \ar[rd,phantom,"\square"] & X^{\prime \prime} \ar[r] \ar[d] \ar[rd,phantom,"\square"] & S \ar[d,"k"] \\
Y^\prime \ar[r] \ar[d] \ar[rd,phantom,"\square"] & X^\prime \ar[r] \ar[d] & T \\
Y \ar[r,"f"] & X
\ecd
with $k$ a regular embedding and $f\colon Y\to X$ as before. We need to show that for all $\alpha \in A_*(X^\prime)$:
\begin{equation*} k^! f_{\Delta}^!(\alpha) = f^!_{\Delta} k^!(\alpha) \end{equation*}
We form the cartesian diagram
\bcd
Y^{\prime \prime} \ar[r] \ar[d] \ar[rd,phantom,"\square"] & Y\times X^{\prime \prime} \ar[r] \ar[d] \ar[rd,phantom,"\square"] & S \ar[d,"k"] \\
Y^\prime \ar[r] \ar[d] \ar[rd,phantom,"\square"] & Y\times X^\prime \ar[r] \ar[d] & T \\
X \ar[r,"\Delta_X"] & X\times X &
\ecd
and apply commutativity of usual Gysin morphisms. In the case where $k$ is not a regular embedding but rather is equipped with a relative perfect obstruction theory, the same argument works with $k^!$ replaced by $k_{\text{v}}^!$.
\end{proof}

\bibliographystyle{alpha}
\bibliography{relqm}

\begin{thebibliography}{CFKM14}

\bibitem[AC14]{AbramovichChenLog}
D.~Abramovich and Q.~Chen.
\newblock {Stable logarithmic maps to {D}eligne-{F}altings pairs {II}}.
\newblock {\em Asian J. Math.}, 18(3):465--488, 2014.

\bibitem[ACW17]{AbramovichCadmanWise}
D.~{Abramovich}, C.~{Cadman}, and J.~{Wise}.
\newblock Relative and orbifold {G}romov-{W}itten invariants.
\newblock {\em Algebr. Geom.}, 4(4):472--500, 2017.

\bibitem[Ber00]{Bertram}
A.~Bertram.
\newblock {Another way to enumerate rational curves with torus actions}.
\newblock {\em Invent. Math.}, 142(3):487--512, 2000.

\bibitem[CFK]{CF-K-MirrorSymmetry}
I.~Ciocan-Fontanine and B.~Kim.
\newblock {Quasimap wall-crossings and mirror symmetry}.
\newblock arXiv:1611.05023.

\bibitem[CFK10]{CF-K}
I.~Ciocan-Fontanine and B.~Kim.
\newblock {Moduli stacks of stable toric quasimaps}.
\newblock {\em Adv. Math.}, 225(6):3022--3051, 2010.

\bibitem[CFK14]{CF-K-wallcrossing}
I.~Ciocan-Fontanine and B.~Kim.
\newblock {Wall-crossing in genus zero quasimap theory and mirror maps}.
\newblock {\em Algebr. Geom.}, 1(4):400--448, 2014.

\bibitem[CFK16]{CFKBigI}
I.~Ciocan-Fontanine and B.~Kim.
\newblock Big {$I$}-functions.
\newblock In {\em Development of moduli theory---{K}yoto 2013}, volume~69 of
  {\em Adv. Stud. Pure Math.}, pages 323--347. Math. Soc. Japan (Tokyo), 2016.

\bibitem[CFK17]{CF-K-higher-genus}
I.~Ciocan-Fontanine and B.~Kim.
\newblock Higher genus quasimap wall-crossing for semipositive targets.
\newblock {\em J. Eur. Math. Soc. (JEMS)}, 19(7):2051--2102, 2017.

\bibitem[CFKM14]{CFKM}
I.~Ciocan-Fontanine, B.~Kim, and D.~Maulik.
\newblock {Stable quasimaps to {GIT} quotients}.
\newblock {\em J. Geom. Phys.}, 75:17--47, 2014.

\bibitem[Che14]{ChenLog}
Q.~Chen.
\newblock {Stable logarithmic maps to {D}eligne--{F}altings pairs {I}}.
\newblock {\em Ann. of Math. (2)}, 180(2):455--521, 2014.

\bibitem[Cos06]{Costello}
K.~Costello.
\newblock {Higher genus {G}romov-{W}itten invariants as genus zero invariants
  of symmetric products}.
\newblock {\em Ann. of Math. (2)}, 164(2):561--601, 2006.

\bibitem[Cox95]{CoxRing}
D.~A. Cox.
\newblock {The homogeneous coordinate ring of a toric variety}.
\newblock {\em J. Algebraic Geom.}, 4(1):17--50, 1995.

\bibitem[CZ14]{CZ-mirror}
Y.~Cooper and A.~Zinger.
\newblock {Mirror symmetry for stable quotients invariants}.
\newblock {\em Michigan Math. J.}, 63(3):571--621, 2014.

\bibitem[FTY]{FanTsengYou}
H.~{Fan}, H-H. {Tseng}, and F.~{You}.
\newblock {Mirror theorems for root stacks and relative pairs}.
\newblock arXiv:1811.10807.

\bibitem[Ful98]{FUL}
W.~Fulton.
\newblock {\em {Intersection theory}}, volume~2 of {\em {Ergebnisse der
  Mathematik und ihrer Grenzgebiete. 3. Folge. (Results in Mathematics and
  Related Areas. 3rd Series)}}.
\newblock Springer-Verlag, Berlin, second edition, 1998.

\bibitem[FWY]{FanWuYou}
H.~{Fan}, L.~{Wu}, and F.~{You}.
\newblock {Structures in genus-zero relative Gromov--Witten theory}.
\newblock arXiv:1810.06952.

\bibitem[Gat02]{Ga}
A.~Gathmann.
\newblock {Absolute and relative {G}romov-{W}itten invariants of very ample
  hypersurfaces}.
\newblock {\em Duke Math. J.}, 115(2):171--203, 2002.

\bibitem[Gat03a]{GathmannThesis}
A.~Gathmann.
\newblock {{G}romov-{W}itten invariants of hypersurfaces}, 2003.
\newblock Habilitation thesis.

\bibitem[Gat03b]{Ga-MF}
A.~Gathmann.
\newblock {Relative {G}romov-{W}itten invariants and the mirror formula}.
\newblock {\em Math. Ann.}, 325(2):393--412, 2003.

\bibitem[Giv96]{Givental-equivariantGW}
A.~Givental.
\newblock {Equivariant {G}romov-{W}itten invariants}.
\newblock {\em Internat. Math. Res. Notices}, (13):613--663, 1996.

\bibitem[Giv98]{Givental-mirror}
A.~Givental.
\newblock {A mirror theorem for toric complete intersections}.
\newblock In {\em {Topological field theory, primitive forms and related topics
  ({K}yoto, 1996)}}, volume 160 of {\em {Progr. Math.}}, pages 141--175.
  Birkh{\"a}user Boston, Boston, MA, 1998.

\bibitem[GP99]{GraberPandharipande}
T.~Graber and R.~Pandharipande.
\newblock {Localization of virtual classes}.
\newblock {\em Invent. Math.}, 135(2):487--518, 1999.

\bibitem[GS13]{GrossSiebertLog}
M.~Gross and B.~Siebert.
\newblock {Logarithmic {G}romov-{W}itten invariants}.
\newblock {\em J. Amer. Math. Soc.}, 26(2):451--510, 2013.

\bibitem[Li01]{Li1}
J.~Li.
\newblock {Stable morphisms to singular schemes and relative stable morphisms}.
\newblock {\em J. Differential Geom.}, 57(3):509--578, 2001.

\bibitem[Li02]{Li2}
J.~Li.
\newblock {A degeneration formula of {GW}-invariants}.
\newblock {\em J. Differential Geom.}, 60(2):199--293, 2002.

\bibitem[LLY97]{LLY1}
B.~H. Lian, K.~Liu, and S-T. Yau.
\newblock {Mirror principle. {I}}.
\newblock {\em Asian J. Math.}, 1(4):729--763, 1997.

\bibitem[Man12a]{Manolache-Pull}
C.~Manolache.
\newblock {Virtual pull-backs}.
\newblock {\em J. Algebraic Geom.}, 21(2):201--245, 2012.

\bibitem[Man12b]{Manolache-Push}
C.~Manolache.
\newblock {Virtual push-forwards}.
\newblock {\em Geom. Topol.}, 16(4):2003--2036, 2012.

\bibitem[Man14]{ManolacheStable}
C.~Manolache.
\newblock Stable maps and stable quotients.
\newblock {\em Compos. Math.}, 150(9):1457--1481, 2014.

\bibitem[MOP11]{MOP}
A.~Marian, D.~Oprea, and R.~Pandharipande.
\newblock {The moduli space of stable quotients}.
\newblock {\em Geom. Topol.}, 15(3):1651--1706, 2011.

\bibitem[MP06]{MaulikPandharipande}
D.~Maulik and R.~Pandharipande.
\newblock A topological view of {G}romov-{W}itten theory.
\newblock {\em Topology}, 45(5):887--918, 2006.

\bibitem[{Oko}]{OkounkovKTheory}
A.~{Okounkov}.
\newblock {Lectures on K-theoretic computations in enumerative geometry}.
\newblock arXiv: 1512.07363.

\bibitem[PR03]{Popa-Roth}
M.~Popa and M.~Roth.
\newblock {Stable maps and {Q}uot schemes}.
\newblock {\em Invent. Math.}, 152(3):625--663, 2003.

\bibitem[Vak00]{Vre}
R.~Vakil.
\newblock {The enumerative geometry of rational and elliptic curves in
  projective space}.
\newblock {\em J. Reine Angew. Math.}, 529:101--153, 2000.

\end{thebibliography}

\bigskip\bigskip

\noindent Luca Battistella\\
Mathematisches Institut, Ruprecht-Karls-Universit\"at Heidelberg\\
\href{mailto:lbattistella@mathi.uni-heidelberg.de}{lbattistella@mathi.uni-heidelberg.de}\bigskip

\noindent Navid Nabijou\\
DPMMS, University of Cambridge\\
\href{mailto:nn333@cam.ac.uk}{nn333@cam.ac.uk}

\end{document}